\newtheorem{theorem}{Theorem}[section] 
\newtheorem{lemma}[theorem]{Lemma} 
\newtheorem{remark}[theorem]{Remark}
\newtheorem{definition}[theorem]{Definition}
\newtheorem{example}[theorem]{Example}
\newtheorem{proposition}[theorem]{Proposition}
\newcommand{\mc}{\mathcal}
\newcommand{\R}{\mathbb{R}}
\newcommand{\Z}{\mathbb{Z}}
\newcommand{\N}{\mathbb{N}}
\newcommand{\eps}{\varepsilon}
\newcommand{\reach}{R}
\newcommand{\vol}{v}
\newcommand{\volR}[1]{\vol_R\left(#1\right)}
\newcommand{\volF}[1]{\vol_F\left(#1\right)}
\DeclareMathOperator{\dist}{dist}
\newcommand{\FullVol}[1]{\vol(#1)}
\title{Towards optimal space-time discretization for reachable sets 
of nonlinear control systems}
\author{Janosch Rieger and Kyria Wawryk}
\date{}
\begin{document}

\maketitle
\begin{abstract}
Reachable sets of nonlinear control systems can in general only be approximated 
numerically, and these approximations are typically very expensive to compute. 
In this paper, we explore a strategy for choosing the temporal and spatial 
discretizations of Euler's method for reachable set computation in a non-uniform way  
to improve the performance of the method. 
\end{abstract}

\textbf{Mathematics Subject Classification:} 
93B03, 65L50.

\textbf{Keywords:}
Reachable sets of control systems; Numerical approximation; 
Efficient discretization; 
Euler's method.

\section{Introduction}

The reachable set of a control system or, equivalently, a differential inclusion,
is the set of all states the system can be 
steered into at a given time.
Reachable sets are particularly interesting in applications 
where they support decision-making, see e.g.\ \cite{Gerdts:2013, Meng:2022}, 
where some controls model 
external disturbances 
that need to be taken into account, 
see e.g.\ \cite{Goubault:2020,Lakatos,Lygeros}, or
where the reachable set models a spatial object to be controlled, see e.g.\ 
\cite{Colombo:Lorenz:15,Colombo:2013}.
Since explicit formulas for reachable sets are usually not available, 
they have to be approximated by numerical methods.

\medskip

Numerical methods for the approximation of reachable sets are computationally 
expensive.
They evolve an entire set according to a flow that is multivalued 
at every point in time and space, and because of the wrapping effect,
a very fine spatial discretization is required to achieve a tolerable
global error.
In addition, standard tools for complexity reduction like a posteriori
error estimation 
are not available in this context.
Currently, the only measure of quality for the approximate reachable 
sets at our disposal is the a priori error bound.

\medskip

Reachable sets of linear control systems can be approximated 
by polytopes, and they interact with the flow of the system in a relatively 
straightforward way.
For this reason, theoretically sound and well-performing algorithms
are available in this setting, see 
\cite{Althoff:zonotopes,Girard,Reissig:2022} for efficient computations 
in zonotope representation, see \cite{Baier:Diss,Baier:2007} for methods 
based on the Pontryagin maximum principle, and see \cite{Shao} for an 
application of Benson's algorithm.

\medskip

In the context of nonlinear control systems, it is less obvious how 
the reachable sets can be discretized and evolved most efficiently.
There are several schools of thought that advance their 
ideas independently.

Numerical methods for linear systems are applied to local linearizations 
of nonlinear systems.
This results in an overapproximation of the reachable set by a collection 
of simple shapes on which the linearization errors are tolerable, 
see e.g.\ \cite{Alamo,Althoff:CORA,Kochdumper,Rungger:2018}.
These algorithms are reported to perform very well when the system 
is only mildly nonlinear, and to have difficulties in the presence of strong 
nonlinearity, see the discussion in \cite{Rungger:2018}.

Runge-Kutta methods for nonlinear differential equations are generalized
to reachable set computation in a straightforward way.
Their order of convergence in terms of the step-size 
cannot, in general, exceed $2$, see \cite{Veliov:counterexample}.
Since second-order convergence requires very strong assumptions on the 
control system, see \cite{Veliov,Veliov:affine}, most effort has been directed 
towards the refinement of first-order methods.
Some work focuses on explicit time discretizations and their properties
in the absence of spatial discretization, see e.g.\ 
\cite{Baier:Chahma,Donchev:Farkhi,Dontchev,Sandberg}.
Implicit methods, which are preferable in the context of stiff systems, 
were investigated in \cite{Beyn:2010,Mordukhovich:2016,Rieger:2014}.
When implemented, the discrete reachable sets are usually represented
as subsets of grids, see e.g.\ \cite{Beyn:2007,Komarov,Rieger:2015,Rieger:2016}.

Algorithms based on optimal control, see e.g.\ \cite{Baier:2013, Riedl:2021},
reduce the computational complexity of the problem by eliminating 
spatial discretizations in intermediate time-steps.
The speedup achieved by these methods is significant, but unlike 
in the linear case, they do not necessarily converge.

\medskip

In this paper, we make a contribution to the class of Runge-Kutta
methods for reachable set computation by reducing the complexity 
of Euler's method with an approach in the spirit of adaptive refinement:
Given an Euler approximation of the reachable tube with a relatively coarse
space-time discretization, we can estimate the a priori unknown 
complexity of Euler's scheme for other space-time discretizations.
Using this estimate, we 
determine a cost-effective finer discretization using a greedy
local refinement strategy and compute
the corresponding approximate reachable tube.
We repeat this process until the a priori 
error reaches a given tolerance.

Our primary goal is to prove that this algorithm terminates in finite time. 
The analysis of the greedy refinement strategy is complicated by the effect 
the local refinement of the space-time discretization on one time-interval 
has on the computational complexity of Euler's scheme on adjacent time-intervals
-- an effect that does not occur in classical iterative refinement schemes
such as adaptive quadrature.

Numerical experiments shown in the final section of the paper
suggest that our algorithm has the potential to 
outperform the state-of-the art Euler scheme with uniform discretization.
The new method seems to be particularly effective when the volume of the 
reachable set grows fast in time, because it is able to balance 
propagated local errors and local computational complexity by
adjusting its discretization.

\section{Notation}
\label{sec:notation}

We consider the space $\R^d$ equipped with the max norm $\|\cdot\|_\infty$.
For $r>0$ and $x\in\R^d$, we define the ball of radius $r$ by
\[B_r(x):=\{y\in\R^d:\|x-y\|_\infty\le r\}.\]
Let $\mc{K}(\R^d)$ and $\mc{KC}(\R^d)$ denote the collections of all nonempty 
and compact subsets and all nonempty, compact and convex subsets of $\R^d$,
respectively.
The Hausdorff distance $\dist_H:\mc{K}(\R^d)\times\mc{K}(\R^d)\to\R_+$
is given by
\[
\dist_H(A,B)=\max\{\sup_{a\in A}\inf_{b\in B}\|a-b\|_\infty,\,
\sup_{b\in B}\inf_{a\in A}\|a-b\|_\infty\}.
\]
A set-valued function $F:\R^d\to\mc{KC}(\R^d)$ is called $L$-Lipschitz for $L\ge 0$ if
\[\dist_H(F(x),F(y))\le L\|x-y\|_\infty\quad\forall\,x,y\in\R^d.\]

For any $\rho>0$, we define the projector 
\[
\pi_\rho:\mc{K}(\R^d)\to 2^{\rho\Z^d}\cap\mc{K}(\R^d),\quad
\pi_\rho(A):=(A+B_\frac{\rho}{2}(0))\cap\rho\Z^d.
\]
It is easy to see that $\pi_\rho(A)$ is indeed nonempty for all $A\in\mc{K}(\R^d)$
and that
\begin{equation}\label{pi:error}
\dist_H(A,\pi_\rho(A))\le\tfrac{\rho}{2}\quad\forall\,A\in\mc{K}(\R^d).
\end{equation}
For any finite set $A$, we denote its cardinality by $\#A$.
We abbreviate 
\[[m,n]:=\{m,m+1,...,n-1,n\}\subset\N\]
when delimiting indices, we denote
\[
\R_{+}^d:=\{x\in\R^d:x_i\ge 0\ \forall\,i\},\quad
\R_{>0}^d:=\{x\in\R^d:x_i>0\ \forall\,i\},
\]
and for every $h\in\R^n$, we define the cumulative sums
\[
\Sigma_+h\in\R^{n+1},\quad
\left(\Sigma_+ {h}\right)_k=\sum_{j=1}^{k}h_j\quad\forall\,k\in[0,n].
\]

\section{Euler's method with non-uniform discretization}
\label{sec:Euler:nonuniform}

We consider the differential inclusion
\begin{equation}\label{eq:ODI}
\dot{x}(t)\in F(x(t))\ \text{for a.e.}\ t\in(0,T),\quad x(0)\in X_0,
\end{equation}
where $X_0\in\mc{K}(\R^d)$ and $F:\R^d\to\mc{KC}(\R^d)$ is $L$-Lipschitz 
and satisfies the uniform bound
\begin{equation}\label{P:bound}
\sup_{x\in\R^d}\sup_{f\in F(x)}\|f\|_\infty\le P.
\end{equation}
It is well-known that sufficiently regular control systems of the form
\begin{align*}
\dot{x}(t)&=f(x(t),u(t))\quad\text{for a.e.}\ t\in(0,T),\\
u(t)&\in U\quad\text{for a.e.}\ t\in(0,T),\\
x(0)&\in X_0
\end{align*}
with control set $U\in\mc{K}(\R^m)$ and initial set $X_0\in\mc{K}(\R^d)$
can be equivalently reformulated as differential inclusions of the form \eqref{eq:ODI}
with 
\[F(x):=\mathrm{conv}(\cup_{u\in U}f(x,u)),\] 
see \cite[Theorem 2.3]{2002-Smirnov}
and \cite[Theorem 2 in Section 2.4.2]{1984-AubinCellina-Book}.
Since many related papers on reachable set approximation use the representation 
\eqref{eq:ODI} for technical reasons, we adopt this convention in this paper.

\begin{definition}
The solution set and the reachable sets of inclusion \eqref{eq:ODI} are
\begin{align*}
S^F(T)&:=\{x(\cdot)\in W^{1,1}([0,T],\R^d):x(\cdot)\ 
\text{solves inclusion \eqref{eq:ODI}}\},\\
\reach^F(t)&:=\{x(t):x(\cdot)\in S^F(T)\},\quad t\in[0,T].
\end{align*}
\end{definition}

Since $X_0\in\mc{K}(\R^d),$ and because of the properties of the mapping $F$, 
it follows from \cite[Corollary 4.5]{2002-Smirnov} that $\reach^F(t)\in\mc{K}(\R^d)$ for all $t\in[0,T]$.

\medskip

We approximate the solution set and the reachable sets of inclusion \eqref{eq:ODI}
by a fully discrete Euler scheme.

\begin{definition}
Given $n\in\N$, $h\in\R_{>0}^n$ with $\sum_{j=1}^nh_j=T$ and $\rho\in\R_{>0}^{n+1}$, 
we define the discrete solution set and reachable sets
\begin{equation}\begin{aligned}
S^F_{h,\rho}(T)&:=\{(y_k)_{k=0}^n\subset\R^d:\ y_0\in \pi_{\rho_0}(X_0),\\ 
&\hspace{10ex} y_{k+1}\in\pi_{\rho_{k+1}}(y_k+h_{k+1}F(y_k))\ \forall\,k\in[0,n-1]\},\\
\reach^F_{h,\rho}(k)&:=\{y_k:y\in S^F_{h,\rho}(T)\}.
\end{aligned}\label{discrete:reachable:sets}\end{equation}
\end{definition}

Since $X_0\in\mc{K}(\R^d)$, it follows directly from the properties of the mapping $F$ 
and the projector $\pi_\rho$ that 
$\reach^F_{h,\rho}(k)\in 2^{\rho_k\Z^d}\cap\mc{K}(\R^d)$ for all $k\in[0,n]$.
For uniform discretizations, the construction of the discrete reachable sets 
in \eqref{discrete:reachable:sets} coincides with 
the scheme presented in \cite{Beyn:2007} and \cite{Komarov}.

\medskip

The proof of the following error bound is very similar to proofs presented
in \cite{Beyn:2007} and \cite{Komarov} for uniform discretizations.
In the uniform case, Theorem \ref{err:thm:NoE2} reduces to the 
corresponding error estimates given in the literature.

\begin{theorem}\label{err:thm:NoE2}
The exact reachable sets of inclusion \eqref{eq:ODI} with property \eqref{P:bound}
and the discrete reachable sets \eqref{discrete:reachable:sets} satisfy
\[
\dist_H(\reach^F(t_k),\reach^F_{h,\rho}(k))\\
\le e^{LT}\frac{\rho_0}{2}
+\sum_{j=1}^ke^{L(T-t_j)}(e^{Lh_j}-1)(Ph_j+\frac{\rho_j}{2}+\frac{\rho_j}{2Lh_j})
\]
for all $k\in[0,n]$, where $t=\Sigma_+h$.
\end{theorem}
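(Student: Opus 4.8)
The plan is to establish this error bound by a one-step analysis followed by a discrete Gr\"onwall-type accumulation, tracking how each local error source propagates forward through the flow. The total error at step $k$ splits into two contributions: the initial projection error, captured by the term $e^{LT}\tfrac{\rho_0}{2}$, and the sum of local truncation-plus-projection errors committed at each intermediate step $j$, each amplified by a factor $e^{L(T-t_j)}$ reflecting how far forward it must propagate. I would begin by fixing an exact trajectory $x(\cdot)\in S^F(T)$ and comparing it to a suitable discrete trajectory $(y_k)\in S^F_{h,\rho}(T)$, and symmetrically comparing a given discrete trajectory to a nearby exact one, since the Hausdorff distance requires control in both directions.

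The core technical estimate is the \emph{one-step error}. On the interval $[t_{j-1},t_j]$ of length $h_j$, I would compare the exact flow of the inclusion starting from a point to the single Euler-plus-projection update $y_j\in\pi_{\rho_j}(y_{j-1}+h_jF(y_{j-1}))$. The projection contributes at most $\tfrac{\rho_j}{2}$ by the estimate \eqref{pi:error}. The Euler step itself, using the velocity bound \eqref{P:bound} and $L$-Lipschitz continuity of $F$, produces a local consistency error on the order of $Ph_j$ for the first-order scheme; the Lipschitz constant governs how the set-valued right-hand side varies along the trajectory over the time increment. The somewhat unusual term $\tfrac{\rho_j}{2Lh_j}$ I expect to arise when the projection error incurred at the \emph{previous} grid level is re-expanded through the one-step growth factor $(e^{Lh_j}-1)$; writing the propagation factor as $(e^{Lh_j}-1)$ rather than a cruder bound is precisely what allows the projection error $\tfrac{\rho_j}{2}$ to be divided by $Lh_j$ after the accumulation, so I would keep the local growth factor in this sharp exponential form throughout.

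Next I would set up the recursion. Writing $\eps_k:=\dist_H(\reach^F(t_k),\reach^F_{h,\rho}(k))$, the one-step analysis combined with the $L$-Lipschitz stability of the exact flow over one time-step should yield an inequality of the shape
\[
\eps_k\le e^{Lh_k}\eps_{k-1}+(e^{Lh_k}-1)\bigl(Ph_k+\tfrac{\rho_k}{2}+\tfrac{\rho_k}{2Lh_k}\bigr),
\]
with the base case $\eps_0\le\tfrac{\rho_0}{2}$ coming directly from the initial projection. The stability factor $e^{Lh_k}$ measures the expansion of the exact reachable map over one step and follows from a standard Filippov- or Gr\"onwall-type argument applied to the inclusion \eqref{eq:ODI}. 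The final step is to unroll this linear recursion. Multiplying through and telescoping gives
\[
\eps_k\le e^{Lt_k}\tfrac{\rho_0}{2}+\sum_{j=1}^k e^{L(t_k-t_j)}(e^{Lh_j}-1)\bigl(Ph_j+\tfrac{\rho_j}{2}+\tfrac{\rho_j}{2Lh_j}\bigr),
\]
and since $t_k\le T$ and each amplification factor is nonnegative, replacing $t_k$ by $T$ in the exponents yields the stated bound.

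The step I expect to be the main obstacle is the one-step error estimate in the direction from the \emph{discrete} trajectory back to an exact one: given the piecewise-constant Euler update, I must produce an exact solution of \eqref{eq:ODI} staying within the claimed distance, which requires a Filippov-type selection argument to construct an admissible velocity field matching the Euler increment up to the $O(h_j)$ and projection tolerances. Getting the precise constants to land on $Ph_j+\tfrac{\rho_j}{2}+\tfrac{\rho_j}{2Lh_j}$ rather than a looser bound, and in particular correctly isolating the $\tfrac{\rho_j}{2Lh_j}$ contribution, is the delicate bookkeeping; everything else is the routine Gr\"onwall machinery. Since the argument closely parallels the uniform-step proofs in \cite{Beyn:2007} and \cite{Komarov}, with the only genuine change being that $h$ and $\rho$ carry a step-dependent index, I would organize the proof so that the non-uniformity enters only through keeping $h_j$ and $\rho_j$ inside the sum rather than factoring them out.
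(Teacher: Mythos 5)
Your proposal follows exactly the route the paper itself takes: the paper gives no detailed proof but defers to the uniform-step arguments of \cite{Beyn:2007} and \cite{Komarov}, i.e.\ a one-step consistency-plus-projection estimate (with a Filippov selection for the discrete-to-exact direction of the Hausdorff distance), the one-step recursion $\eps_k\le e^{Lh_k}\eps_{k-1}+(e^{Lh_k}-1)(Ph_k+\tfrac{\rho_k}{2}+\tfrac{\rho_k}{2Lh_k})$, and a discrete Gr\"onwall unrolling with $e^{L(t_k-t_j)}\le e^{L(T-t_j)}$. Your sketch is correct in structure and constants (note $(e^{Lh_k}-1)\tfrac{\rho_k}{2Lh_k}\ge\tfrac{\rho_k}{2}$ absorbs the per-step projection error, and $(e^{Lh_k}-1)Ph_k$ dominates the $O(LPh_k^2)$ consistency error), so it matches the intended proof and nothing further is needed beyond the standard bookkeeping you already identify.
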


We restate the algorithm from \cite{Beyn:2007} and \cite{Komarov} in Algorithm
\ref{Alg:state:of:the:art} for reference.
Given an error tolerance $\eps>0$, it determines the coarsest uniform discretization
of the form
\[
h=\tfrac{T}{n}\mathbbm{1}_{\R^n},\quad 
t=\Sigma_+h,\quad 
\rho=\tfrac{T^2}{n^2}\mathbbm{1}_{\R^{n+1}},
\]
so that the error from Theorem \ref{err:thm:NoE2} is at most $\eps$, and then computes 
the discrete reachable sets as in \eqref{discrete:reachable:sets}.

\begin{algorithm}[ht]
\KwIn{Error tolerance $\eps>0$.}
\KwOut{\begin{minipage}[t]{0.788\textwidth} 
Discretization $(h,t,\rho)\in\R^{n}_{>0}\times\R^{n+1}_+\times\R^{n+1}_{>0}$,\\
Reachable sets $\mc{R}^F_{h,\rho}(k)$, $k=0,\ldots,n$.
\end{minipage}}
$n=\min\{m\in\N: 
m^2\eps-m\left(e^{LT}-1\right)\left(PT+\tfrac{T}{2L}\right)
-T^2\left(e^{LT}-\tfrac{1}{2}\right)\ge0\}$\;
$h\gets\tfrac{T}{n}\mathbbm{1}_{\R^n}$;
$t\gets\Sigma_+h$;
$\rho\gets\tfrac{T^2}{n^2}\mathbbm{1}_{\R^{n+1}}$\;
$\reach^F_{h,\rho}(0)\gets\pi_{\rho_0}(X_0)$\;
\For{$k\gets 1$ \KwTo $n$}{
$\reach^F_{h,\rho}(k)\gets\emptyset$\;
\For{$x\in\reach^F_{h,\rho}(k-1)$}{
$\reach^F_{h,\rho}(k)
\gets\reach^F_{h,\rho}(k)\cup\pi_{\rho_{k}}(x+h_{k}F(x))$\;}}
\caption{Euler scheme with optimal uniform discretization}
\label{Alg:state:of:the:art}
\end{algorithm}

\section{Efficient space-time discretization}\label{section:optimisation}

\begin{algorithm}[p]\label{Alg:IterativeMethod}
\KwIn{Error thresholds $\eps_1>\eps_2>\ldots>\eps_{\ell_{\max}}>0$.}
\KwOut{\begin{minipage}[t]{0.788\textwidth}
Discretization  $(h^{(m)},t^{(m)},\rho^{(m)})
\in\R^{n_m}_{>0}\times\R^{n_m+1}_{+}\times\R^{n_m+1}_{>0}$,\\
Reachable sets $\reach^F_{h^{(m)},\rho^{(m)}}(k)\subset\rho^{(m)}_k\Z^d$,
$k=0,\ldots,n_m$.
\end{minipage}}
$\left(h^{(0)},t^{(0)},\rho^{(0)}\right)\gets\left((T),(0,T),(2LPT^2,2LPT^2)\right)$\;
$(n_0,m,\ell)\gets (1,0,0)$\;
\While{$\ell\le\ell_{\max}$}{
\If{$m=0$ or $E({h^{(m)},t^{(m)},\rho^{(m)}})\le\eps_\ell$}{        
$\reach^F_{h,\rho}(0)\gets\pi_{\rho_0}(X_0)$\;
$\hat{v}_{R,0}^{(m)}\gets(\#\reach^F_{h,\rho}(0))\rho_0^{d_R}$\;
\For{$k\gets 1$ \KwTo $n$}{
$\reach^F_{h,\rho}(k)\gets\emptyset$\; 
$\hat{v}_{F,k-1}^{(m)}\gets 0$\;
\For{$x\in\reach^F_{h,\rho}(k-1)$}{
$\reach^F_{h,\rho}(k)
\gets\reach^F_{h,\rho}(k)\cup\pi_{\rho_{k}}(x+h_{k}F(x))$\;
$\hat{v}_{F,k-1}^{(m)}\gets \hat{v}_{F,k-1}^{(m)}+(\#\pi_{\rho_{k}}(x+h_{k}F(x)))$\;}
$\hat{v}_{R,k}^{(m)}\gets(\#\reach^F_{h,\rho}(k))\rho_{k}^d$\;
$\hat{v}_{F,k-1}^{(m)}\gets\frac{\rho_{k}^d}{h_{k}^d}\frac{\hat{v}_{F,k-1}^{(m)}}{\#\reach^F_{h,\rho}(k-1)}$\;}
$\vol_R^{(m)}\gets\text{[linear spline]}(t^{(m)},\hat{v}_{R}^{(m)})$\; 
$\hat{v}_{F,n}^{(m)}\gets \hat{v}_{F,n-1}^{(m)}$; $\vol_F^{(m)}\gets\text{[linear spline]}(t^{(m)},\hat{v}_{F}^{(m)},)$\;
$\ell\gets \ell+1$\;}
\Else{
$k_m\gets \arg\max_j
\frac{-\Delta E(h^{(m)},t^{(m)},\rho^{(m)};j)}
{\Delta C(h^{(m)},t^{(m)},\rho^{(m)},\vol_R^{(m)},\vol_F^{(m)};j)}$\;
\uIf{$k_m=0$}{$n_{m+1}\gets n_m$\;}
\Else{$n_{m+1}\gets n_m+1$\;}

$\left(h^{(m+1)},t^{(m+1)},\rho^{(m+1)}\right)\gets \psi[h^{(m)},t^{(m)},\rho^{(m)};k_m]$

$(\vol_R^{(m+1)},\vol_F^{(m+1)})\gets(\vol_R^{(m)},\vol_F^{(m)})$\;
$m\gets m+1$\;}}
\caption{Iterative refinement based on Euler's scheme.\\
For the definitions of $E$, $\Delta E$ and $\Delta C$ see \eqref{eq:errdef}, 
\eqref{def:Delta:E} and \eqref{def:deltaC}.}
\end{algorithm}

The quality of a discretization
\[(h,t,\rho)\in\R^n_{>0}\times\R^{n+1}_+\times\R^{n+1}_{>0}\] 
is determined by the error bound
\begin{align}
E(h,t,\rho)&:=\sum_{j=0}^n\mc{E}_j(h,t,\rho),\label{eq:errdef}\\
\mc{E}_j(h,t,\rho)&:=\begin{cases}
e^{LT}\frac{\rho_0}{2},&j=0,\\
e^{L(T-t_j)}(e^{Lh_j}-1)\left(Ph_j+\frac{\rho_j}{2}+\frac{\rho_j}{2Lh_j}\right),&j\in[1,n],
\end{cases}\label{def:Ej}
\end{align}
from Theorem \ref{err:thm:NoE2} and the computational complexity 
\begin{align}
\hat{C}(h,t,\rho)&:=\sum_{j=0}^{n-1}\hat{\mc{C}}_j(h,t,\rho),\label{def:ExC}\\
\hat{\mc{C}}_j(h,t,\rho)
&:=\sum_{x_j\in R_{h,\rho}^F(j)}\#\pi_{\rho_{j+1}}(x_j+h_{j+1}F(x_j)),
\label{def:ExCj}
\end{align}
which is only known exactly after the reachable sets have been computed.
The need for an estimator of $\hat{C}$ suggests the iterative procedure
implemented in Algorithm \ref{Alg:IterativeMethod}:
Given
\begin{itemize}
\item [i)] a discretization 
$(h^{(m)},t^{(m)},\rho^{(m)})
\in\R^{n^{(m)}}_{>0}\times\R^{n^{(m)}+1}_+\times\R^{n^{(m)}+1}_{>0}$,
\item [ii)] approximate reachable sets $R_{h^{(m)},\rho^{(m)}}^F(j)$ 
for $j\in[0,n^{(m)}]$ and 
\item [iii)] the complexities $\hat{\mc{C}}_j(h^{(m)},t^{(m)},\rho^{(m)})$
for $j\in[0,n^{(m)}-1]$,
\end{itemize}
we compute the surrogate volumes
\begin{align}
\hat{v}_{R,j}^{(m)}
&:=[\#R_{h^{(m)},\rho^{(m)}}^F(j)](\rho^{(m)}_j)^{d_R},\quad
j\in[0,n^{(m)}],\label{def:Rvol}\\
\hat{v}_{F,j}^{(m)}
&:=\begin{cases}
\frac{\hat{\mc{C}}_j(h^{(m)},t^{(m)},\rho^{(m)})}
{\#R^F_{h^{(m)},\rho^{(m)}}(j)}
\left(\frac{\rho_{j+1}^{(m)}}{h_{j+1}^{(m)}}\right)^{d_F},&j\in[0,n^{(m)}-1],\\
\hat{v}_{F,n^{(m)}-1}^{(m)},&j=n^{(m)},
\end{cases}\label{def:Fvol}
\end{align}
of the reachable sets and the images of $F$, where 
\[d_R:=\max_{t\in[0,T]}\dim R^F(t),\quad
d_F:=\max_{t\in[0,T]}\max_{x\in R^F(t)}\dim F(x).\]
We interpolate the data $(t^{(m)}_j,\hat{v}_{R,j}^{(m)})_{j=0}^{n^{(m)}}$ 
and $(t^{(m)}_j,\hat{v}_{F,j}^{(m)})_{j=0}^{n^{(m)}}$  with piecewise linear splines \[v_R^{(m)},v_F^{(m)}:[0,T]\to\R_{>0}\] 
and use the estimated numbers
\begin{align}
C(h,t,\rho;v_R^{(m)},v_F^{(m)})
&:=\sum_{j=0}^{n-1}\mc{C}_j(h,t,\rho;v_R^{(m)},v_F^{(m)}),\label{def:C}\\
\mc{C}_j(h,t,\rho;v_R^{(m)},v_F^{(m)})
&:=\frac{v_R^{(m)}(t_j)}{\rho_j^{d_R}}
\frac{v_F^{(m)}(t_j)h_{j+1}^{d_F}}{\rho_{j+1}^{d_F}},\label{def:Cj}
\end{align}
of grid points to be computed by iteration 
\eqref{discrete:reachable:sets} with a finer discretization $(h,t,\rho)$. (When there is no potential for misunderstandings, we omit
the arguments $v_R^{(m)}$ and $v_F^{(m)}$ of $C$ and $\mc{C}_j$.)
Based on error bound \eqref{eq:errdef} and the complexity estimator \eqref{def:C},
Algorithm \ref{Alg:IterativeMethod} refines the discretization 
$(h^{(m)},t^{(m)},\rho^{(m)})$  according to the rule
\begin{equation}\label{subdivision:merged:rule}
\psi[h,t,\rho;j]=\left(\psi_h[h,j], \psi_t[t,j],\psi_\rho[\rho,j]\right)
\end{equation}
given by
\begin{equation}\begin{aligned}
\psi_h[h,j]&:=\begin{cases}
h,&j=0,\\
(h_1,...,h_{j-1},h_j/2,h_j/2,h_{j+1},...,h_n),&j\in[1,n],
\end{cases}\\
\psi_t[t,j]&:=\begin{cases}t,&j=0,\\
(t_0,...,t_{j-1},t_j-h_j/2,t_j,t_{j+1},...,t_n),&j\in[1,n],
\end{cases}\\
\psi_\rho[\rho,j]&:=\begin{cases}
(\rho_0/4,\rho_1,...,\rho_n),&j=0,\\
(\rho_0,...,\rho_{j-1},\rho_j/4,\rho_j/4,\rho_{j+1},...,\rho_n),&j\in[1,n],
\end{cases}        
\end{aligned}\label{subdivision:rules}\end{equation}
choosing the index $j$ that promises maximal error decrease per 
projected cost increase, until the error estimate \eqref{eq:errdef} 
reaches a certain threshold.
Then the recursion \eqref{discrete:reachable:sets} is carried out,
the exact cost \eqref{def:ExC} is updated, and a new iteration can begin.

\begin{remark}
When $j\in[1,n]$, the rules \eqref{subdivision:rules} subdivide the $j$-th 
time-interval and adjust the spatial mesh in such a way that the local errors 
induced by the temporal and spatial discretization are balanced, see 
\cite[Remark 3]{Beyn:2007} and Lemma \ref{conv:lem:DeltaE} below.
\end{remark}

The purpose of this section is to prove the convergence of 
Algorithm \ref{Alg:IterativeMethod}.
We first collect some technical preliminaries.

\medskip

The following lemma quantifies how the subdivision rules \eqref{subdivision:rules} 
affect the components $\mc{E}_n^j(h,t,\rho)$ from \eqref{def:Ej}. 
\begin{lemma}\label{conv:lem:DeltaE}
Let $h\in\R^n_{>0}$, $t\in\R^{n+1}_+$ and $\rho\in\R^{n+1}_{>0}$ with
\begin{equation}\label{runtime}
t=\Sigma_+h\quad\text{and}\quad\rho_j=2LP(h_j)^2\quad\forall\,j\in[1,n].
\end{equation}
Then for any $k\in[0,n]$, the quantity
\begin{equation}\label{def:Delta:E}
\Delta E(h,t,\rho;k)
:=E(\psi[h,t,\rho;k])-E(h,t,\rho)
\end{equation}
satisfies
\begin{align*}
\Delta E(h,t,\rho;k) &=\begin{cases}
-\frac{3e^{LT}}{8}\rho_0,&k=0,\\
-e^{L(T-t_k)}\left(e^{Lh_k}-1\right)\left(Ph_k+\frac{3LPh_k^2}{4}\right),&k\in[1,n],
\end{cases}\\
&\le\begin{cases}
-\frac12\mc{E}_0(h,t,\rho),&k=0,\\
-\frac12\mc{E}_k(h,t,\rho),&k\in[1,n].
\end{cases}
\end{align*}
\end{lemma}

\begin{proof}
In the following computations, all addends that are not affected
by the interval subdivision cancel out.
When $k=0$, we observe that
\[\mc{E}_j(\psi[h,t,\rho;0])
=\mc{E}_j(h,t,\rho)\quad\forall\,j\in[1,k],\]
so we have
\begin{align*}
\Delta E(h,t,\rho;0)
&=E(\psi[h,t,\rho;0])-E(h,t,\rho)\\
&=\sum_{j=0}^{n}\mc{E}_j((\psi[h,t,\rho;0]))
-\sum_{j=0}^{n}\mc{E}_j(h,t,\rho)\\
&=\mc{E}_0(\psi[h,t,\rho;0])-\mc{E}_0(h,t,\rho)\\
&=e^{LT}\frac{\rho_0}{8}-e^{LT}\frac{\rho_0}{2}
=-\frac{3e^{LT}}{8}\rho_0\le -\frac{e^{LT}}{4}\rho_0=-\frac{1}{2}\mc{E}_0(h,t,\rho).
\end{align*}
When $k\in[1,n]$, we observe that
\[\mc{E}_j(\psi[h,t,\rho;k])
=\begin{cases}
\mc{E}_j(h,t,\rho),&j\in[0,k-1],\\
\mc{E}_{j-1}(h,t,\rho),&j\in[k+2,n+1],
\end{cases}\]
so, using the rules \eqref{subdivision:merged:rule} and \eqref{subdivision:rules}, we compute 
\begin{align*}
&\Delta E(h,t,\rho;k)
=E(\psi[h,t,\rho;k])-E(h,t,\rho)\\
&=\sum_{j=0}^{n+1}\mc{E}_j(\psi[h,t,\rho;k])
-\sum_{j=0}^{n}\mc{E}_j(h,t,\rho)\\
&=\mc{E}_k(\psi[h,t,\rho;k])
+\mc{E}_{k+1}(\psi[h,t,\rho;k])
-\mc{E}_k(h,t,\rho)\\
&=e^{L(T-t_k)}(e^{Lh_k}-1)\left(-\frac{1}{2}Ph_k-\frac{3}{8}\rho_k
-\frac{1}{4}\frac{\rho_k}{Lh_k}\right)\\
&=-e^{L(T-t_k)}(e^{Lh_k}-1)\left(Ph_k+\frac{3}{4}LPh_k^2\right),
\end{align*}
where the final step is due to the second part of statement \eqref{runtime}. In particular, we have
\begin{align*}
\Delta E(h,t,\rho;k)
&=-e^{L(T-t_k)}(e^{Lh_k}-1)\left(Ph_k+\frac{3}{4}LPh_k^2\right)\\
&\le -e^{L(T-t_k)}(e^{Lh_k}-1)\left(Ph_k+\frac{1}{2}LPh_k^2\right)
=-\frac{1}{2}\mc{E}_k(h,t,\rho).
\end{align*}
\end{proof}

The facts listed in Lemma \ref{simple:facts} can be deduced from the subdivision rules 
\eqref{subdivision:rules} in a straightforward induction.

\begin{lemma}\label{simple:facts} 
Let $N\in\N_1$, and let the sequences $(h^{(m)})_{m=0}^N$, $(t^{(m)})_{m=0}^N$,
$(\rho^{(m)})_{m=0}^N$, $(k_m)_{m=0}^N$, and $(n_m)_{m=0}^N$ 
be generated by Algorithm \ref{Alg:IterativeMethod}.
Then for all $m\in[0,N]$, the vectors $h^{(m)}$, $t^{(m)}$ and $\rho^{(m)}$ satisfy
\begin{align}
&(\forall\,j\in[1,n_m])\,(\exists \ell^{(m)}_j\in\N)\,(h^{(m)}_j=2^{-\ell^{(m)}_j}T),
\label{2l}\\
&\rho^{(m)}_j=2LP\left(h^{(m)}_j\right)^2\quad\forall j\in[1,n_m],\label{coupling}\\
&(\forall\,j\in[0,n_m])\,(\exists i^{(m)}_j\in\N)\,(t^{(m)}_j=i^{(m)}_jh^{(m)}_j).\label{conv:eq:grid}\\
&t^{(m)}=\Sigma_+h^{(m)},\label{h:makes:t}
\end{align}
\end{lemma}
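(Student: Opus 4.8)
The plan is to prove the four assertions \eqref{2l}--\eqref{h:makes:t} simultaneously by induction on $m$, using that any two consecutive members of the generated sequences differ by exactly one application of the subdivision operator $\psi[\,\cdot\,;k_m]$ defined in \eqref{subdivision:merged:rule}--\eqref{subdivision:rules}. For the base case $m=0$ I would read the properties directly off the initialization $(h^{(0)},t^{(0)},\rho^{(0)})=((T),(0,T),(2LPT^2,2LPT^2))$ with $n_0=1$: the identity $h^{(0)}_1=T=2^{-0}T$ yields \eqref{2l}; $\rho^{(0)}_1=2LPT^2=2LP(h^{(0)}_1)^2$ yields \eqref{coupling}; $t^{(0)}_0=0$ together with $t^{(0)}_1=T=1\cdot h^{(0)}_1$ yields \eqref{conv:eq:grid}; and $t^{(0)}=(0,T)=\Sigma_+h^{(0)}$ yields \eqref{h:makes:t}.

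For the inductive step I assume \eqref{2l}--\eqref{h:makes:t} at index $m$ and pass to $(h^{(m+1)},t^{(m+1)},\rho^{(m+1)})=\psi[h^{(m)},t^{(m)},\rho^{(m)};k_m]$. The case $k_m=0$ is immediate: by \eqref{subdivision:rules} only the entry $\rho_0$ is changed (to $\rho_0/4$) while $h$, $t$ and $n$ are untouched, and since $\rho_0$ plays no role in \eqref{coupling}, all four properties transfer verbatim. For $k_m\in[1,n_m]$ we have $n_{m+1}=n_m+1$, and I would check each property against the explicit rules. Property \eqref{2l} follows because the two step sizes created at positions $k_m$ and $k_m+1$ both equal $h^{(m)}_{k_m}/2=2^{-(\ell^{(m)}_{k_m}+1)}T$, while the remaining components are only relabelled. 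Property \eqref{coupling} follows because the two new entries of $\rho$ equal $\rho^{(m)}_{k_m}/4=2LP(h^{(m)}_{k_m})^2/4=2LP(h^{(m)}_{k_m}/2)^2$ by the inductive hypothesis. Property \eqref{h:makes:t} follows by comparing $\psi_t[t^{(m)};k_m]$ with the partial sums of $\psi_h[h^{(m)};k_m]$: using $t^{(m)}_{k_m-1}=t^{(m)}_{k_m}-h^{(m)}_{k_m}$ one checks that the inserted node $t^{(m)}_{k_m}-h^{(m)}_{k_m}/2$ and every shifted node coincide with the corresponding cumulative sum of the refined step vector.

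The remaining property \eqref{conv:eq:grid} is where the real care is needed, and I expect it to be the main obstacle. Writing $t^{(m)}_{k_m}=i^{(m)}_{k_m}h^{(m)}_{k_m}$ with $i^{(m)}_{k_m}\ge1$ (which holds because $t^{(m)}_{k_m}>0$ for $k_m\ge1$), the newly inserted node satisfies $t^{(m)}_{k_m}-h^{(m)}_{k_m}/2=(2i^{(m)}_{k_m}-1)(h^{(m)}_{k_m}/2)$, and the node $t^{(m)}_{k_m}$, now sitting at position $k_m+1$, satisfies $t^{(m)}_{k_m}=2i^{(m)}_{k_m}(h^{(m)}_{k_m}/2)$; both are integer multiples of the halved step size $h^{(m)}_{k_m}/2$, the untouched and relabelled nodes inherit the property from the inductive hypothesis, and the instance $j=0$ is trivial since $t_0=0$. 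The delicate point is that the single half-step node $t^{(m)}_{k_m}-h^{(m)}_{k_m}/2$ did not lie on the previous mesh, so one must verify it lands on the refined grid; this verification relies on the interplay of \eqref{h:makes:t} and \eqref{conv:eq:grid} and on $i^{(m)}_{k_m}\ge1$, so that $2i^{(m)}_{k_m}-1\in\N$. For this reason the four statements must be carried through the induction jointly rather than separately; apart from this bookkeeping the argument is the straightforward induction promised in the text.
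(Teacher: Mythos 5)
Your induction is correct and is precisely the "straightforward induction" on the subdivision rules \eqref{subdivision:rules} that the paper invokes without writing out; the base case, the trivial $k_m=0$ case, and the joint propagation of \eqref{2l}--\eqref{h:makes:t} through a single application of $\psi$, including the observation that the inserted node equals $(2i^{(m)}_{k_m}-1)\,h^{(m)}_{k_m}/2$, are exactly the details the authors leave to the reader. No discrepancies.
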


Now we quantify how the subdivision rules \eqref{subdivision:rules} 
affect the components $\mc{C}_j(h,t,\rho)$ from \eqref{def:Cj}.

\begin{lemma}\label{conv:lem:DeltaC}
Consider functions $\vol_R,\vol_F:[0,T]\to\R_{>0}$, and a discretization
$(h,t,\rho)\in\R_{>0}^n\times\R^{n+1}_+\times\R_{>0}^{n+1}$ satisfying $T=\Sigma_+h$ 
and equation \eqref{coupling}. 

Then for any $k\in[0,n]$, the quantity
\begin{equation}\label{def:deltaC}
\Delta C(h,t,\rho;k):=C(\psi[h,t,\rho;k])-C(h,t,\rho)
\end{equation}
with $\psi$ as defined in equation \eqref{subdivision:merged:rule}, with $C$ as defined in statement \eqref{def:C},
and with 
\[\FullVol{t}:=\volR{t}\volF{t},\ \forall t\in[0,T]\]
satisfies
\begin{equation}\label{conv:eq:fullcost}
\Delta C(h,t,\rho;k)
=\left\{\begin{array}{ll}
\FullVol{0}\left(\frac{4^{d_R}-1}{\rho_0^{d_R}}\right)
\left(\frac{h_{1}}{\rho_{1}}\right)^{d_F},
&k=0,\\
\FullVol{T-\tfrac{h_n}{2}}\left(\frac{2h_k}{\rho_k}\right)^{d_F}
\left(\frac{4}{\rho_k}\right)^{d_R}&\\
\qquad+\FullVol{t_{n-1}}\left(\frac{2^{d_F}-1}{\rho_{k-1}^{d_R}}\right)
\left(\frac{h_k}{\rho_k}\right)^{d_F},
&k=n,\\
\FullVol{t_k-\tfrac{h_k}{2}}\left(\frac{2h_k}{\rho_k}\right)^{d_F}
\left(\frac{4}{\rho_k}\right)^{d_R}\\
\qquad+\FullVol{t_k}\left(\frac{4^{d_R}-1}{\rho_k^{d_R}}\right)
\left(\frac{h_{k+1}}{\rho_{k+1}}\right)^{d_F}&\\
\qquad+\FullVol{t_{k-1}}\left(\frac{2^{d_F}-1}{\rho_{k-1}^{d_R}}\right)
\left(\frac{h_k}{\rho_k}\right)^{d_F},
&\text{otherwise}.
\end{array}\right.
\end{equation}
Denoting
\begin{equation*}
V_L:=\inf_{t\in[0,T]}\FullVol{t}
\quad\text{and}\quad 
V_U:=\sup_{t\in[0,T]}\FullVol{t},
\end{equation*}
we find the inclusion 
\begin{equation}\label{delta:c}
\Delta C(h,t,\rho;k)
\in[V_L,V_U]\left\{\begin{array}{ll}
\left(\frac{4^{d_R}-1}{\rho_0^{d_R}}\right)\left(\frac{h_{1}}{\rho_{1}}\right)^{d_F},
&k=0,\\
\left(\frac{2h_k}{\rho_k}\right)^{d_F}\left(\frac{4}{\rho_k}\right)^{d_R}&\\
\qquad+\left(\frac{2^{d_F}-1}{\rho_{k-1}^{d_R}}\right)
\left(\frac{h_k}{\rho_k}\right)^{d_F},
&k=n,\\
\left(\frac{2h_k}{\rho_k}\right)^{d_F}\left(\frac{4}{\rho_k}\right)^{d_R}&\\
\qquad+\left(\frac{4^{d_R}-1}{\rho_k^{d_R}}\right)
\left(\frac{h_{k+1}}{\rho_{k+1}}\right)^{d_F}&\\
\qquad+\left(\frac{2^{d_F}-1}{\rho_{k-1}^{d_R}}\right)
\left(\frac{h_k}{\rho_k}\right)^{d_F},
&\text{otherwise}.
\end{array}\right.
\end{equation}
\end{lemma}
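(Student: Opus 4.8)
The plan is to prove the equality \eqref{conv:eq:fullcost} by a direct term-by-term comparison of the complexity sums $C(\psi[h,t,\rho;k])$ and $C(h,t,\rho)$, and then to read off the inclusion \eqref{delta:c} by bounding the volume factors. Writing $\FullVol{\tau}=\volR{\tau}\volF{\tau}$, definition \eqref{def:Cj} becomes $\mc{C}_j(h,t,\rho)=\FullVol{t_j}\,h_{j+1}^{d_F}/(\rho_j^{d_R}\rho_{j+1}^{d_F})$, so every summand is a value of $\FullVol{\cdot}$ at a node times a strictly positive factor assembled from $h_{j+1},\rho_j,\rho_{j+1}$. The key structural fact is that the rules \eqref{subdivision:rules} alter only a short window of indices around $k$, so all but a handful of summands cancel. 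Once \eqref{conv:eq:fullcost} is established, the inclusion is immediate: each summand there has the shape $\FullVol{\tau}\cdot(\text{positive coefficient})$ with $\tau\in[0,T]$, and $\FullVol{\tau}\in[V_L,V_U]$ by the definitions of $V_L$ and $V_U$, so replacing every volume factor by its lower bound $V_L$ and its upper bound $V_U$ converts \eqref{conv:eq:fullcost} into \eqref{delta:c}.

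First I would dispose of the case $k=0$, where $\psi_h$ and $\psi_t$ act trivially and $\psi_\rho$ merely replaces $\rho_0$ by $\rho_0/4$. Hence $\mc{C}_j$ is unchanged for every $j\ge1$, and only $\mc{C}_0$ feels the refinement: its factor $\rho_0^{-d_R}$ is multiplied by $4^{d_R}$, so $\Delta C=(4^{d_R}-1)\mc{C}_0$, which is the claimed $k=0$ expression since $t_0=0$.

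The heart of the proof is the interior case $k\in[1,n-1]$, where $\psi$ inserts one extra interval and the refined sum runs to $n$ instead of $n-1$. Reading off \eqref{subdivision:rules}, I would verify that the refined summands coincide with the old ones for $j\in[0,k-2]$ and, after a shift of the index by one, for $j\in[k+2,n]$, so these all cancel and only three contributions survive. Halving $h_k$ and quartering $\rho_k$ multiply the summand anchored at $t_{k-1}$ (whose interval length and right-hand grid size both shrink) by $2^{d_F}$, and multiply the summand anchored at $t_k$ (whose left-hand grid size shrinks) by $4^{d_R}$, while a genuinely new midpoint summand $\FullVol{t_k-\tfrac{h_k}{2}}(2h_k/\rho_k)^{d_F}(4/\rho_k)^{d_R}$ appears between them. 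Collecting these and rewriting the two rescaled summands through \eqref{def:Cj} as $(2^{d_F}-1)\mc{C}_{k-1}$ and $(4^{d_R}-1)\mc{C}_k$ reproduces the ``otherwise'' branch of \eqref{conv:eq:fullcost}.

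The one step that genuinely needs care, and which I expect to be the main obstacle, is the endpoint case $k=n$: because the complexity sum \eqref{def:C} stops at $n-1$, there is no old summand $\mc{C}_n$ to be rescaled by $4^{d_R}$, so the middle term of the generic formula disappears and only the $2^{d_F}$-rescaled summand at $t_{n-1}$ together with the new midpoint summand survive. Using $t_n=T$, which holds because $T=\Sigma_+h$, the midpoint node equals $T-\tfrac{h_n}{2}$, and one recovers precisely the stated $k=n$ branch. I would finally note that the coupling \eqref{coupling} enters only to make the rule $\rho_k\mapsto\rho_k/4$ consistent with $h_k\mapsto h_k/2$; the computation of $\Delta C$ is otherwise purely algebraic in the subdivision rules, and the positivity of $\volR{\cdot}$ and $\volF{\cdot}$ is all that the final inclusion needs.
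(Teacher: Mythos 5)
Your proposal is correct and follows essentially the same route as the paper: cancel the summands $\mc{C}_j$ unaffected by the subdivision, compute the handful of surviving terms explicitly from \eqref{subdivision:rules} (identifying the $2^{d_F}$-rescaling at $t_{k-1}$, the $4^{d_R}$-rescaling at $t_k$, and the new midpoint summand), and then bound each volume factor by $V_L$ and $V_U$ to obtain \eqref{delta:c}. The only difference is that you spell out the boundary cases $k=0$ and $k=n$, which the paper declares ``similar'' and omits; your treatment of them is accurate.
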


\begin{proof}
We only display the proof of equation \eqref{conv:eq:fullcost} and inclusion \eqref{delta:c} for the case $k\in[1,n-1]$, 
because the cases $k=0$ and $k=n$ are similar.
By definition of $\psi_h$, $\psi_t$, $\psi_\rho$ and $\mc{C}_j$, we have
\begin{equation}\label{aux:1}
\mc{C}_j(\psi[h,t,\rho;k])
=\begin{cases}
\mc{C}_j(h,t,\rho),&j\in[0,k-2],\\
\mc{C}_{j-1}(h,t,\rho),&j\in[k+2,n+1].
\end{cases}
\end{equation}
In the following computation, we use the definition \eqref{def:C} of the cost estimator $C$, 
statement \eqref{aux:1}, which allows us to cancel all but five addends, 
the definition \eqref{def:Cj} of $\mc{C}_j$, and
the definitions \eqref{subdivision:rules} and \eqref{subdivision:merged:rule} of $\psi_h$, $\psi_t$, $\psi_\rho$, and $\psi$.
We obtain 
\begin{align*}
&\Delta C(h,t,\rho;k)
=C(\psi[h,t,\rho;k])-C(h,t,\rho)\notag\\
&=\sum_{j=0}^{n}\mc{C}_j(\psi[h,t,\rho;k])-\sum_{j=0}^{n-1}\mc{C}_j(h,t,\rho)\\
&=\mc{C}_{k-1}(\psi[h,t,\rho;k])
+\mc{C}_k(\psi[h,t,\rho;k])\\
&\qquad\qquad+\mc{C}_{k+1}(\psi[h,t,\rho;k])
-\mc{C}_{k-1}(h,t,\rho)-\mc{C}_k(h,t,\rho)\\
&=\frac{\volR{(\psi_t[t,k])_{k-1}}}{(\psi_\rho[\rho,k])_{k-1}^{d_R}}\frac{\volF{(\psi_t[t,k])_{k-1}}(\psi_h[h,k])_k^{d_F}}{(\psi_\rho[\rho,k])_k^{d_F}}\\
&\qquad\qquad+\frac{\volR{(\psi_t[t,k])_k}}{(\psi_\rho[\rho,k])_k^{d_R}}\frac{\volF{(\psi_t[t,k])_k}(\psi_h[h,k])_{k+1}^{d_F}}{(\psi_\rho[\rho,k])_{k+1}^{d_F}}\\
&\qquad\qquad+\frac{\volR{(\psi_t[t,k])_{k+1}}}{(\psi_\rho[\rho,k])_{k+1}^{d_R}}\frac{\volF{(\psi_t[t,k])_{k+1}}(\psi_h[h,k])_{k+2}^{d_F}}{(\psi_\rho[\rho,k])_{k+2}^{d_F}}\\
&\qquad\qquad-\frac{\volR{t_{k-1}}}{\rho_{k-1}^{d_R}}\frac{\volF{t_{k-1}}h_k^{d_F}}{\rho_{k}^{d_F}}
-\frac{\volR{t_k}}{\rho_k^{d_R}}\frac{\volF{t_k}h_{k+1}^{d_F}}{\rho_{k+1}^{d_F}}\\
&=\FullVol{t_{k-1}}\left(\tfrac{(2^{d_F}-1)h_k^{d_F}}{\rho_{k-1}^{d_R}\rho_k^{d_F}}\right)
+\FullVol{t_k}\left(\tfrac{(4^{d_R}-1)h_{k+1}^{d_F}}{\rho_k^{d_R}\rho_{k+1}^{d_F}}\right)\\
&\qquad\qquad+\FullVol{t_k-\tfrac{h_k}{2}} \left(2^{d_F+2d_R}\frac{h_k^{d_F}}{\rho_k^{d_F}\rho_k^{d_R}}\right)\\
&\in[V_L,V_U]\left(\tfrac{(2^{d_F}-1)h_k^{d_F}}{\rho_{k-1}^{d_R}\rho_k^{d_F}}+\tfrac{(4^{d_R}-1)h_{k+1}^{d_F}}{\rho_k^{d_R}\rho_{k+1}^{d_F}}
+2^{d_F+2d_R}\frac{h_k^{d_F}}{\rho_k^{d_F}\rho_k^{d_R}}\right).
\end{align*}
The final step of the above computation is justified because all factors are positive.
\end{proof}

Now we begin to collect evidence for the convergence of Algorithm \ref{Alg:IterativeMethod},
which we will prove by contradiction. 

\medskip

\begin{lemma}\label{conv:lem:tozeroes}
Assume that Algorithm \ref{Alg:IterativeMethod} does not terminate, and that it generates
the sequences $(h^{(m)})_{m=0}^\infty$, $(t^{(m)})_{m=0}^\infty$, $(\rho^{(m)})_{m=0}^\infty$, 
$(k_m)_{m=0}^\infty$, and $(n_m)_{m=0}^\infty$.
Then we have
\begin{equation}
\lim_{m\to\infty}\Delta E(h^{(m)},t^{(m)},\rho^{(m)};k_m)=0,\label{local:1}
\end{equation}
as well as 
\begin{equation}
    \lim_{m\to\infty} \rho^{(m)}_{k_m}=0 \quad \text{and} \quad \lim_{\substack{m\to\infty\\k_m\ne 0}}h_{k_m}^{(m)}=0.\label{local:2}
\end{equation}
\end{lemma}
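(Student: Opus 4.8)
The plan is to exploit the monotonicity of the error functional together with the lower bound on the error decrease from Lemma \ref{conv:lem:DeltaE}. Every refinement step replaces $(h^{(m)},t^{(m)},\rho^{(m)})$ by $\psi[h^{(m)},t^{(m)},\rho^{(m)};k_m]=(h^{(m+1)},t^{(m+1)},\rho^{(m+1)})$, so the defining equation \eqref{def:Delta:E} gives
\[
\Delta E(h^{(m)},t^{(m)},\rho^{(m)};k_m)
=E(h^{(m+1)},t^{(m+1)},\rho^{(m+1)})-E(h^{(m)},t^{(m)},\rho^{(m)}).
\]
Summing this telescoping identity over $m=0,\dots,M-1$ and using that $E\ge 0$, since each summand $\mc{E}_j$ in \eqref{def:Ej} is nonnegative, yields
\[
\sum_{m=0}^{M-1}\bigl(-\Delta E(h^{(m)},t^{(m)},\rho^{(m)};k_m)\bigr)
\le E(h^{(0)},t^{(0)},\rho^{(0)}).
\]
By Lemma \ref{conv:lem:DeltaE} every term on the left is nonnegative, so the partial sums are increasing and bounded; letting $M\to\infty$ shows the series converges, hence its general term tends to $0$, which is exactly \eqref{local:1}.

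For \eqref{local:2} I would read the explicit formulas of Lemma \ref{conv:lem:DeltaE} backwards, using that Lemma \ref{simple:facts} guarantees the invariants \eqref{coupling} and \eqref{h:makes:t}, so the hypotheses \eqref{runtime} of Lemma \ref{conv:lem:DeltaE} hold for every $(h^{(m)},t^{(m)},\rho^{(m)})$. Abbreviating $e_m:=-\Delta E(h^{(m)},t^{(m)},\rho^{(m)};k_m)$, we have $e_m\to 0$ by the first part. When $k_m=0$ the formula reads $e_m=\tfrac{3e^{LT}}{8}\rho^{(m)}_0$, so $\rho^{(m)}_{k_m}=\rho^{(m)}_0=\tfrac{8}{3e^{LT}}e_m\to 0$. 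When $k_m\in[1,n_m]$, I would bound the positive factors from below via $e^{L(T-t_{k_m})}\ge 1$ (since $t_{k_m}\le T$), $e^{Lh^{(m)}_{k_m}}-1\ge Lh^{(m)}_{k_m}$, and $Ph^{(m)}_{k_m}+\tfrac34 LP(h^{(m)}_{k_m})^2\ge Ph^{(m)}_{k_m}$, to obtain $e_m\ge LP(h^{(m)}_{k_m})^2$. Hence $h^{(m)}_{k_m}\le\sqrt{e_m/(LP)}\to 0$, which is the second limit in \eqref{local:2}, and the coupling \eqref{coupling} then gives $\rho^{(m)}_{k_m}=2LP(h^{(m)}_{k_m})^2\le 2e_m\to 0$. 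Combining the two cases yields $\rho^{(m)}_{k_m}\to 0$ along the full sequence.

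The only delicate point is the bookkeeping in the telescoping step: a refinement at $k_m\ne 0$ increases the number of time intervals from $n_m$ to $n_m+1$, so the vectors change dimension and the components of $E$ are reindexed. This is precisely the reindexing already performed inside the proof of Lemma \ref{conv:lem:DeltaE}, so the identity $\Delta E=E(\psi[\,\cdot\,])-E(\,\cdot\,)$ remains valid verbatim and no extra argument is needed beyond invoking \eqref{def:Delta:E}. Everything else reduces to the elementary estimate $e^x-1\ge x$ and the standing invariants from Lemma \ref{simple:facts}.
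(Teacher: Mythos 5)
Your proposal is correct and follows essentially the same route as the paper: the telescoping/boundedness argument for \eqref{local:1} is identical, and \eqref{local:2} is obtained in both cases by bounding $-\Delta E$ from below by a quantity comparable to the local discretization parameters and invoking the coupling $\rho_j=2LP h_j^2$ from Lemma \ref{simple:facts}. The only (immaterial) difference is the order: the paper first deduces $\rho^{(m)}_{k_m}\to 0$ directly via $-\Delta E\ge\tfrac14\rho^{(m)}_{k_m}$ and then gets $h^{(m)}_{k_m}\to 0$ from the coupling, whereas you first deduce $h^{(m)}_{k_m}\to 0$ via $-\Delta E\ge LP(h^{(m)}_{k_m})^2$ and then recover $\rho^{(m)}_{k_m}\to 0$.
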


\begin{proof}
By Lemma \ref{conv:lem:DeltaE}, we have
\[\Delta E(h^{(m)},t^{(m)},\rho^{(m)};k_m)\le 0\quad\forall\,m\in\N,\]
and by construction, we have
\begin{align*}
-E(h^{(0)},t^{(0)},\rho^{(0)})
&\le E(h^{(m)},t^{(m)},\rho^{(m)})-E(h^{(0)},t^{(0)},\rho^{(0)})\\
&=\sum_{j=1}^{m-1}\Delta E(h^{(j)},t^{(j)},\rho^{(j)};k_j),
\end{align*}
which implies statement \eqref{local:1}.
Using Lemma \ref{conv:lem:DeltaE}, the identity \eqref{def:Ej} and the fact that $e^{Ls}\ge(1+Ls)$ 
for all $s\in\R$, we conclude that
\begin{align*}
&\Delta E(h^{(m)},t^{(m)},\rho^{(m)};k_m)
\le-\frac12\mc{E}_{k_m}(h^{(m)},t^{(m)},\rho^{(m)})\\
&=-\frac12\begin{cases}
e^{LT}\frac{\rho_{k_m}}{2},&k_m=0,\\
e^{L(T-t_{k_m})}(e^{Lh_{k_m}}-1)\left(Ph_{k_m}+\frac{\rho_{k_m}}{2}+\frac{\rho_{k_m}}{2Lh_{k_m}}\right),
&k_m\in[1,n]
\end{cases}\\
&\le-\frac12\begin{cases}
e^{LT}\frac{\rho_{k_m}}{2},&k_m=0,\\
e^{L(T-t_{k_m})}\frac{\rho_{k_m}}{2},&k_m\in[1,n]
\end{cases}
\le-\frac{1}{4}\rho_{k_m}^{(m)},
\end{align*}
which, together with statement \eqref{local:1}, implies the first part 
of statement \eqref{local:2}. Finally, the first part 
of statement \eqref{local:2}, combined with Lemma \ref{simple:facts} then provides the second part of statement \eqref{local:2}.
\end{proof}

The following technical lemma states that Algorithm \ref{Alg:IterativeMethod} relabels, 
but never discards any previously computed nodes $t_j^{(m)}$. It can be proven using a straightforward induction on $m$. 

\begin{lemma}\label{conv:lem:immortalnode}
Assume that Algorithm \ref{Alg:IterativeMethod} does not terminate, and that it generates
the sequences $(h^{(m)})_{m=0}^\infty$, $(t^{(m)})_{m=0}^\infty$, $(\rho^{(m)})_{m=0}^\infty$, 
$(k_m)_{m=0}^\infty$, and $(n_m)_{m=0}^\infty$.
Then for every $m_0\in\N$ and $j\in[0,n_{m_0}]$, there exists a monotone increasing sequence
$(j_m)_{m=m_0}^\infty$ with $j_m\in[0,n_m]$ for all $m\in[m_0,\infty)$ and
\[t^{(m_0)}_{j}=t^{(m)}_{j_m}\quad\forall\,m\in[m_0,\infty).\]
\end{lemma}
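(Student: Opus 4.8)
The plan is to fix $m_0\in\N$ and $j\in[0,n_{m_0}]$ and to construct the sequence $(j_m)_{m=m_0}^\infty$ by induction on $m\ge m_0$, maintaining throughout the single invariant that $t^{(m)}_{j_m}=t^{(m_0)}_j$. For the base case $m=m_0$ I would set $j_{m_0}:=j$, so that the invariant holds trivially and $j_{m_0}\in[0,n_{m_0}]$.

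For the inductive step I would assume that $j_m\in[0,n_m]$ has already been found with $t^{(m)}_{j_m}=t^{(m_0)}_j$, and exploit the fact that the only change to the time grid between iterations $m$ and $m+1$ is $t^{(m+1)}=\psi_t[t^{(m)},k_m]$, as prescribed by \eqref{subdivision:merged:rule} and \eqref{subdivision:rules}. I would then split according to the refinement index $k_m$. If $k_m=0$, the rule $\psi_t$ leaves the grid unchanged and $n_{m+1}=n_m$, so the choice $j_{m+1}:=j_m$ preserves both the invariant and the index range.

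If instead $k_m\in[1,n_m]$, the rule inserts exactly one new node $t^{(m)}_{k_m}-h^{(m)}_{k_m}/2$ at position $k_m$, shifts every node of index $\ge k_m$ up by one, and sets $n_{m+1}=n_m+1$. Here I would split once more: when $j_m<k_m$ the tracked node keeps its index, so I take $j_{m+1}:=j_m$; when $j_m\ge k_m$ the node is merely relabelled, so I take $j_{m+1}:=j_m+1$. In either subcase, reading off the block structure of $\psi_t$ directly yields $t^{(m+1)}_{j_{m+1}}=t^{(m)}_{j_m}=t^{(m_0)}_j$, together with $j_m\le j_{m+1}\le n_{m+1}$, which closes the induction and simultaneously shows that the resulting sequence is monotone increasing.

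Since the argument uses no analytic estimates, the only delicate point is the index bookkeeping in the case $k_m\in[1,n_m]$: one must check that the correct threshold for incrementing the label is exactly $j_m\ge k_m$, and that after incrementing the index still satisfies $j_{m+1}\le n_{m+1}=n_m+1$. I expect this combinatorial case split to be the main, and essentially only, obstacle, and it is dispatched by careful inspection of the definition of $\psi_t$ in \eqref{subdivision:rules}.
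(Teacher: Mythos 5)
Your proposal is correct and is precisely the ``straightforward induction on $m$'' that the paper invokes without spelling out: the base case $j_{m_0}=j$, the case split on $k_m=0$ versus $k_m\in[1,n_m]$, and the index update $j_{m+1}=j_m$ for $j_m<k_m$ and $j_{m+1}=j_m+1$ for $j_m\ge k_m$ all match the structure of $\psi_t$ in \eqref{subdivision:rules} exactly, and your bookkeeping of the range $j_{m+1}\le n_{m+1}$ and of monotonicity is accurate. Nothing further is needed.
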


Next, we show that if the maximal step-size $\|h^{(m)}\|_\infty$ in Algorithm \ref{Alg:IterativeMethod} does not converge 
to zero, then there exists a subinterval $[\tau_-,\tau_+]\subset[0,T]$ that is never subdivided.  
\begin{proposition} \label{conv:prop:hmeansint}
Assume that Algorithm \ref{Alg:IterativeMethod} does not terminate, and that it generates
the sequences $(h^{(m)})_{m=0}^\infty$, $(t^{(m)})_{m=0}^\infty$, $(\rho^{(m)})_{m=0}^\infty$, 
$(k_m)_{m=0}^\infty$, and $(n_m)_{m=0}^\infty$.
If we have
\begin{equation}\label{h:not:to:zero}
\lim_{m\to\infty}\|h^{(m)}\|_\infty\ne 0,
\end{equation}
then there exist $\tau_-,\tau_+\in[0,T]$ and $m_0\in\N$ such that $\tau_-<\tau_+$ and,
for all $m\ge m_0$, there exists $j_m\in[1,n_m]$ with 
\begin{equation*}
\tau_-=t_{{j_m}-1}^{(m)}\ \text{and}\ \tau_+=t_{j_m}^{(m)}.
\end{equation*}
\end{proposition}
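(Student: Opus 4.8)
The plan is to exploit the dyadic, monotone structure of the step-sizes together with a finiteness argument to isolate a single interval that is frozen from some iterate onwards.

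First I would observe that the maximal step-size $\|h^{(m)}\|_\infty$ is non-increasing in $m$. Indeed, the rule $\psi_h$ in \eqref{subdivision:rules} either leaves $h^{(m)}$ unchanged (when $k_m=0$) or replaces a single entry $h_{k_m}^{(m)}$ by two copies of $h_{k_m}^{(m)}/2$, while all other entries are merely relabelled; in either case no step-size ever increases. Hence $\|h^{(m)}\|_\infty$ converges, and by hypothesis \eqref{h:not:to:zero} its limit is some $\delta>0$. By \eqref{2l} in Lemma \ref{simple:facts}, every step-size lies in the discrete set $\{2^{-\ell}T:\ell\in\N\}$, whose only accumulation point is $0$. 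A non-increasing sequence in this set that converges to $\delta>0$ must be eventually constant and equal to $\delta$; thus there exist $\ell_*\in\N$ and $m_1\in\N$ with $\|h^{(m)}\|_\infty=\delta=2^{-\ell_*}T$ for all $m\ge m_1$.

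Next I would track the collection of maximal intervals
\[
\mc{I}_m:=\{[t_{j-1}^{(m)},t_j^{(m)}]:j\in[1,n_m],\ h_j^{(m)}=\delta\},
\]
which is nonempty for $m\ge m_1$ and contains at most $T/\delta=2^{\ell_*}$ elements, since these intervals are pairwise disjoint and all have length $\delta$. The key monotonicity is that $\mc{I}_{m+1}\subseteq\mc{I}_m$ for $m\ge m_1$: the operations of Algorithm \ref{Alg:IterativeMethod} never create an interval of length $\delta$, because the only new intervals produced by $\psi_t$ have length $\delta/2$; hence the sole way $\mc{I}_m$ can change is that one of its members is subdivided and thereby removed. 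Consequently $\#\mc{I}_m$ is a non-increasing sequence of positive integers and is eventually constant, say for $m\ge m_0\ge m_1$; from $m_0$ on, no member of $\mc{I}_{m_0}$ is ever subdivided, and $\mc{I}_m=\mc{I}_{m_0}$ for all $m\ge m_0$.

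Finally I would fix any interval $[\tau_-,\tau_+]\in\mc{I}_{m_0}$, so that $\tau_-<\tau_+$. For each $m\ge m_0$, Lemma \ref{conv:lem:immortalnode} guarantees that the nodes $\tau_-$ and $\tau_+$ survive as nodes of the discretization $t^{(m)}$. Because $[\tau_-,\tau_+]$ is never subdivided after $m_0$ and $\psi_t$ inserts a new node only at the midpoint of the interval it subdivides, no node is ever inserted into the open interval $(\tau_-,\tau_+)$; hence $\tau_-$ and $\tau_+$ remain consecutive nodes, i.e.\ there is $j_m\in[1,n_m]$ with $\tau_-=t_{j_m-1}^{(m)}$ and $\tau_+=t_{j_m}^{(m)}$, as required. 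I expect the main obstacle to be the second step, namely ruling out that the maximal intervals perpetually ``migrate'' -- that the algorithm keeps subdividing maximal intervals while new ones of the same length keep appearing elsewhere, so that no single interval is ever frozen. The resolution is precisely the observation that $\psi_t$ can only manufacture intervals strictly shorter than the current maximum, which makes $(\mc{I}_m)_{m\ge m_1}$ a nested, finite, nonempty family and forces it to stabilize.
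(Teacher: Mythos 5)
Your proof is correct, and while it starts the same way as the paper's (monotonicity of $\|h^{(m)}\|_\infty$ plus the dyadic structure from \eqref{2l} force the maximal step-size to stabilize at some $\delta=2^{-\ell_*}T$), the second half takes a genuinely different and arguably cleaner route. The paper fixes, for each $m$, \emph{some} interval of maximal length, applies the pigeonhole principle to its finitely many possible positions $i_m\in[1,2^{\ell_{m_0}}]$ to extract a subsequence along which one fixed interval $[\tau_-,\tau_+]$ recurs, and then must invoke Lemma \ref{conv:lem:immortalnode} twice in a contradiction argument to rule out a node being inserted into $(\tau_-,\tau_+)$ at some iterate between subsequence members. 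You instead track the entire family $\mc{I}_m$ of maximal intervals and observe that $\psi_t$ can only manufacture intervals of length at most $\delta/2$, so $(\mc{I}_m)_{m\ge m_1}$ is a nested, finite, nonempty family whose cardinality must stabilize; once it does, every member is frozen. This avoids the subsequence extraction entirely and yields the slightly stronger conclusion that \emph{all} maximal intervals present at $m_0$ persist. Note that your final appeal to Lemma \ref{conv:lem:immortalnode} and to the midpoint-insertion rule is redundant: once $[\tau_-,\tau_+]\in\mc{I}_m$ for all $m\ge m_0$, the definition of $\mc{I}_m$ already says that $\tau_-$ and $\tau_+$ are consecutive nodes of $t^{(m)}$, which is exactly the claim. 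The one small point worth making explicit is that the limit in \eqref{h:not:to:zero} exists because the sequence is non-increasing and bounded below, so the hypothesis indeed gives $\delta>0$; you use this implicitly and it is fine.
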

\begin{proof}
Since $n_m<\infty$, by statement \eqref{2l} in Lemma \ref{simple:facts}, there exist
$\ell_m\in\N$ such that
\begin{equation*}
\|h^{(m)}\|_\infty=2^{-\ell_m}T\quad\forall\,m\in\N,
\end{equation*}
and by construction of $h^{(m)}$, the sequence $(\ell_m)_m$ is monotone increasing. 
By assumption \eqref{h:not:to:zero}, there exists $m_0\in\N$ such that 
\begin{equation*}
\ell_m=\ell_{m_0}\quad\forall\,m\in[m_0,\infty),
\end{equation*}
and hence for every $m\in[m_0,\infty)$, there exists an index $j_m\in[1,n_m]$ 
such that $h^{(m)}_{j_m}=2^{-\ell_{m_0}}T$. 
By Lemma \ref{simple:facts}, there exist integers $i_m\in[1,2^{\ell_{m_0}}]$ with
\begin{equation*}
t^{(m)}_{j_m}=i_mh^{(m)}_{j_m}=2^{-\ell_{m_0}}Ti_m\quad\forall\,m\in[m_0,\infty).
\end{equation*}
Since $[1,2^{\ell_{m_0}}]$ is finite, there exist $i^*\in[1,2^{\ell_{m_0}}]$ and 
a subsequence $\N'\subset\N$ such that 
\begin{equation*}
i_m=i^*\quad\forall\,m\in\N'.
\end{equation*}
We define 
\begin{equation*}
\tau_-:=2^{-\ell_{m_0}}T(i^*-1)\quad\text{and}\quad\tau_+:=2^{-\ell_{m_0}}Ti^*.
\end{equation*}
It follows that
\begin{equation}\label{neighbours:along:subsequence}
\tau_-=t^{(m)}_{j_m-1}\quad\text{and}\quad\tau_+=t^{(m)}_{j_m}\quad
\forall\,m\in\N'\cap[m_0,\infty).
\end{equation}
Let $m_0':=\min(\N'\cap[m_0,\infty))$.
By Lemma \ref{conv:lem:immortalnode}, there exist sequences $(j^-_m)_{m=m_0'}^\infty$
and $(j^+_m)_{m=m_0'}^\infty$ with
\[\tau_-=t^{(m)}_{j^-_m}\quad\text{and}\quad\tau_+=t^{(m)}_{j^+_m}
\quad\forall\,m\in[m_0',\infty).\]
Now we show that
\[j_m^-+1=j_m^+\quad\forall\,m\in[m_0',\infty),\]
which completes the proof.
    Otherwise, there exist integers $\hat{m}\in[m_0',\infty)$ and 
    $\hat{j}_{\hat{m}}\in(j_{\hat{m}}^-,j_{\hat{m}}^+)$ with 
    \[\tau_-=t^{(\hat{m})}_{j^-_{\hat{m}}}
    <t^{(\hat{m})}_{\hat{j}_{\hat{m}}}
    <t^{(\hat{m})}_{j^+_{\hat{m}}}=\tau_+.\] 
    But then, Lemma \ref{conv:lem:immortalnode} yields that for
\[m_0'':=\min(\N'\cap[\hat{m},\infty)),\]
there exists 
    $\hat{j}_{m_0''}\in[1,n_{m_0''}]$ with 
    $t^{(m_0'')}_{\hat{j}_{m_0''}}=t_{{\hat{j}}_{\hat{m}}}^{(\hat{m})}$.
    It follows that
    \[\tau_-=t^{(m_0'')}_{j^-_{m_0''}}
    <t^{({m_0''})}_{\hat{j}_{m_0''}}
    <t^{({m_0''})}_{j^+_{m_0''}}=\tau_+,\]
    which forces $j^-_{m_0''}+1<j^+_{m_0''}$ and hence contradicts statement
    \eqref{neighbours:along:subsequence}.
\end{proof}

Now we show that Algorithm \ref{Alg:IterativeMethod} terminates in finite time.

\begin{theorem}\label{Thm:Converges}
For any $\eps_1>\eps_2>\ldots>\eps_{\ell_{\max}}>0$,
Algorithm \ref{Alg:IterativeMethod} 
returns a discretization $(h^{(m)},t^{(m)},\rho^{(m)})$ with
\[E(h^{(m)},t^{(m)},\rho^{(m)})\le\eps_{\ell_{\max}}\]
after a finite number $m\in\N$ of iterations.
\end{theorem}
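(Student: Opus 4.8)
The plan is to argue by contradiction, as the preparatory lemmas suggest. Suppose Algorithm \ref{Alg:IterativeMethod} never terminates. Since the counter $\ell$ increases only when the current error bound drops to the active threshold and is capped by $\ell_{\max}$, non-termination forces $\ell$ to freeze at some value $\ell^*$ and the \textbf{Else}-branch to be taken for all large $m$; hence there is $m^*$ with $E(h^{(m)},t^{(m)},\rho^{(m)})>\eps_{\ell^*}$ for all $m\ge m^*$. Along this tail the surrogate volumes $\vol_R^{(m)},\vol_F^{(m)}$ are frozen, so $V_L:=\inf\FullVol{\cdot}$ and $V_U:=\sup\FullVol{\cdot}$ are fixed with $0<V_L\le V_U<\infty$, and all constants in Lemmas \ref{conv:lem:DeltaE} and \ref{conv:lem:DeltaC} become genuinely constant. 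My goal is to contradict $E>\eps_{\ell^*}$ by proving $E(h^{(m)},\ldots)\to0$.

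First I record that the greedy ratio collapses. Writing $R_j^{(m)}:=-\Delta E(h^{(m)},\ldots;j)/\Delta C(h^{(m)},\ldots;j)$, Lemma \ref{conv:lem:tozeroes} gives $-\Delta E(\ldots;k_m)\to0$ together with $\rho^{(m)}_{k_m}\to0$ and $h^{(m)}_{k_m}\to0$, whence the own-cost term in Lemma \ref{conv:lem:DeltaC} forces $\Delta C(\ldots;k_m)\to\infty$; since $k_m=\argmax_j R_j^{(m)}$, this yields $\max_j R_j^{(m)}=R_{k_m}^{(m)}\to0$.

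Next I would prove $\|h^{(m)}\|_\infty\to0$. If not, Proposition \ref{conv:prop:hmeansint} produces a persistent interval $I^*=[\tau_-,\tau_+]$ of fixed length $h^*$, whose own error contribution is fixed, so $-\Delta E(\ldots;I^*)=:\tilde c^*$ is a fixed positive constant by Lemma \ref{conv:lem:DeltaE}. Then $R_{I^*}^{(m)}=\tilde c^*/\Delta C(\ldots;I^*)\le R_{k_m}^{(m)}\to0$ forces $\Delta C(\ldots;I^*)\to\infty$, which by the formula in Lemma \ref{conv:lem:DeltaC} can only occur if the smaller of the two neighbour step-sizes of $I^*$ tends to $0$. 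The crux—and the main obstacle—is to rule this out against the greedy rule, and this is exactly the coupling between adjacent intervals emphasised in the introduction. Whenever the finer neighbour (step $g$) is actually subdivided, the other neighbour has step $\ge g$, so $\Delta C(\ldots;I^*)\lesssim g^{-\max(d_F,2d_R)}$, while the subdivided neighbour's own ratio obeys $R_{\text{neighbour}}\le C\,g^{2+d_F+2d_R}$; the greedy inequality $R_{\text{neighbour}}\ge R_{I^*}$ then reads $g^{2+d_F+2d_R}\gtrsim g^{\max(d_F,2d_R)}$, and since the exponent gap $2+d_F+2d_R-\max(d_F,2d_R)\ge2$ is strictly positive, $g$ is bounded below. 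This caps both neighbour refinements, keeps $\Delta C(\ldots;I^*)$ bounded and $R_{I^*}$ bounded below, contradicting $R_{I^*}\to0$.

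Then I would show $\rho_0^{(m)}\to0$ by the same mechanism in its simplest form, where no multi-neighbour bookkeeping arises. If $\rho_0$ froze at $\rho_0^*>0$, then $-\Delta E(\ldots;0)$ is a fixed positive constant and $\Delta C(\ldots;0)=A_0\,(h_1)^{-d_F}$, so $R_0^{(m)}\sim(h_1^{(m)})^{d_F}$. The first step-size $h_1$ decreases only when the first interval is subdivided, and at such a stage the greedy inequality $R_1\ge R_0$ combined with $R_1\le C\,(h_1)^{2+d_F+2d_R}$ gives $(h_1)^{2+2d_R}\gtrsim1$, i.e. $h_1$ is bounded below, contradicting $h_1\le\|h^{(m)}\|_\infty\to0$. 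Hence $\rho_0^{(m)}\to0$ and $\mc{E}_0\to0$. Finally, with $\|h^{(m)}\|_\infty\to0$ and the coupling $\rho_j=2LP h_j^2$ from Lemma \ref{simple:facts}, the estimate $e^{Lh_j}-1\le Lh_j e^{LT}$ and $\sum_j h_j=T$ give $\sum_{j\ge1}\mc{E}_j\lesssim\|h^{(m)}\|_\infty\to0$; together with $\mc{E}_0\to0$ this yields $E(h^{(m)},\ldots)\to0$, contradicting $E>\eps_{\ell^*}$ and completing the proof.
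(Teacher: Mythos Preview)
Your overall architecture matches the paper's: argue by contradiction, observe that after finitely many threshold crossings the surrogate volumes (and hence $V_L,V_U$) freeze, reduce the task to proving $\|h^{(m)}\|_\infty\to0$ and $\rho_0^{(m)}\to0$, and finish via the elementary bound on $E$ in terms of $\rho_0$ and $\|h\|_\infty$.

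Where you genuinely diverge is in the core step, the proof that $\|h^{(m)}\|_\infty\to0$. The paper does \emph{not} attempt to bound the neighbour step-sizes of the persistent interval $I^*$. Instead it first rearranges the greedy inequality into the statement $\Delta C(\ldots;k_m)/\Delta C(\ldots;i_m)\to0$, and then invests a non-trivial Step~2 in extracting an infinite subsequence $\N''$ along which $k_m\neq0$ and $h_{k_m}^{(m)}=\min_{j}h_j^{(m)}$. On $\N''$, the global minimality of $h_{k_m}$ bounds every neighbour term appearing in $\Delta C(\ldots;i_m)$ by a constant multiple of $(h_{k_m})^{-(d_F+2d_R)}$, while the own term of $\Delta C(\ldots;k_m)$ already contributes at least a constant multiple of that same power, so the cost ratio is bounded below by a positive constant---contradiction. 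Your route is more local: you invoke the greedy inequality only at the moments when a neighbour of $I^*$ is itself being subdivided, compare that neighbour's ratio $R_{\text{neighbour}}\lesssim g^{2+d_F+2d_R}$ to $R_{I^*}\gtrsim g^{\max(d_F,2d_R)}$, and read off a lower bound on $g$ from the positive exponent gap. This genuinely avoids constructing $\N''$, but you gloss over some bookkeeping it would otherwise absorb: you should make explicit that the minimum neighbour step of $I^*$ is non-increasing and decreases only when the currently finer neighbour is subdivided (so your bound at those instants really propagates to a uniform floor), and you should treat the boundary case $i_m=1$, where the role of the ``left neighbour'' is played by $\rho_0$ rather than by a step-size. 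With those points filled in your argument is complete; the paper's subsequence device is tidier precisely because the single relation $h_{k_m}=\min_j h_j$ controls all neighbouring quantities at once.
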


\begin{proof}
Assume throughout that Algorithm \ref{Alg:IterativeMethod} does not terminate, and that it generates
the sequences $(h^{(m)})_{m=0}^\infty$, $(t^{(m)})_{m=0}^\infty$, $(\rho^{(m)})_{m=0}^\infty$, 
$(k_m)_{m=0}^\infty$, and $(n_m)_{m=0}^\infty$. 
Since $\ell_{\max}<\infty$ there exists $\hat{m}\in\N$ such that 
\begin{equation}\label{same:nu}
\vol_R^{(m)}=\vol_R^{(\hat{m})}\quad\text{and}\quad
\vol_F^{(m)}=\vol_F^{(\hat{m})}\quad\forall\,m\in[\hat{m},\infty).
\end{equation}
Since $X_0\neq\emptyset$, we have $\reach^F_{h^{(\hat{m})},\rho{(\hat{m})}}(k)\neq\emptyset$ 
for all $k\in[0,n_{\hat{m}}]$ by construction. 
Hence $\hat{v}_R^{(\hat{m})},\hat{v}_F^{(\hat{m})}\in\R^{n_{\hat{m}}}_{>0}$, see Algorithm \ref{Alg:IterativeMethod}
and equations \eqref{def:Rvol} and \eqref{def:Fvol}, which implies that the product 
\[v^{(\hat{m})}:=\vol_R^{(\hat{m})}\vol_F^{(\hat{m})}\]
of the linear splines $\vol_R^{(\hat{m})}$ and $\vol_F^{(\hat{m})}$ satisfies
\[0<V_L:=\min_{t\in[0,T]}v^{(\hat{m})}(t)
\le\max_{t\in[0,T]}v^{(\hat{m})}(t)=:V_U<\infty.\]
Since Algorithm \ref{Alg:IterativeMethod} does not terminate, 
we have
\begin{equation}\label{conv:eq:errlb}
E(h^{(m)},t^{(m)},\rho^{(m)})>\eps_{\ell_{\max}}>0 \quad\forall m\in\N.
\end{equation}
If we show that 
\begin{equation}\label{conv:eq:htozero}
\lim_{m\to\infty}\|h^{(m)}\|_\infty=0\quad\text{and}\quad
\lim_{m\to\infty}\rho_0^{(m)}=0,
\end{equation}
then the computation
\begin{align*}
&E(h^{(m)},t^{(m)},\rho^{(m)})\\
&=\frac{\rho_0^{(m)}}{2}e^{L}+P\sum_{j=1}^{n_m}e^{L(1-t_j^{(m)})}
\left(e^{Lh_j^{(m)}}-1\right)\left(2h_j^{(m)}+L\left(h_j^{(m)}\right)^2\right)\\
&\le e^{L}\left(\frac{\rho_0^{(m)}}{2}+PT(2+LT)\left(e^{L\|h^{(m)}\|_\infty}-1\right)\right),
\end{align*}
contradicts statement \eqref{conv:eq:errlb}, and the proof is complete.
We only prove the first statement in \eqref{conv:eq:htozero}.
The proof of the second statement is very similar.

\medskip

We split the indirect proof of statement \eqref{conv:eq:htozero} into three parts. 
\medskip

\noindent\textbf{Step 1:} The 
difference $\Delta C(h^{(m)},t^{(m)},\rho^{(m)};k_m)$ 
increases slowly in $m$.

\medskip

If statement \eqref{conv:eq:htozero} is false, then by Proposition \ref{conv:prop:hmeansint} 
there exist points $\tau_-,\tau_+\in[0,T]$ and $m_0\in[\hat{m},\infty)$ such that 
\begin{equation*}
    (\forall\,m\in[m_0,\infty))(\exists\,i_m\in[1,n_m])
    (\tau_-=t_{i_m-1}^{(m)}\ \text{and}\ \tau_+=t_{i_m}^{(m)}).
\end{equation*}
By Lemma \ref{simple:facts}, we have
\begin{align}
    &h_{i_m}^{(m)}=t_{i_m}^{(m)}-t_{i_m-1}^{(m)}
    =\tau_+-\tau_-
    =t_{i_{m_0}}^{(m_0)}-t_{i_{m_0}-1}^{(m_0)}
    =h_{i_{m_0}}^{(m_0)},\label{local:h}\\
    &\rho_{i_m}^{(m)}
    =2LP\left(h_{i_m}^{(m)}\right)^2
    =2LP\left(h_{i_{m_0}}^{(m_0)}\right)^2
    =\rho_{i_{m_0}}^{(m_0)}\label{local:rho}
\end{align}
for all $m\in[m_0,\infty)$.
In particular, we have
\[k_m\neq i_m\quad\forall\,m\in[m_0,\infty),\]
and by line 19 of Algorithm \ref{Alg:IterativeMethod}, we have
\begin{equation*}
    \frac{-\Delta E(h^{(m)},t^{(m)},\rho^{(m)};i_m)}
    {\Delta C(h^{(m)},t^{(m)},\rho^{(m)};i_m)}
    \le\frac{-\Delta E(h^{(m)},t^{(m)},\rho^{(m)};k_m)}
    {\Delta C(h^{(m)},t^{(m)},\rho^{(m)};k_m)}
    \quad\forall\,m\in[m_0,\infty).
\end{equation*}
After rearranging the above inequality, we use statements \eqref{local:h} 
and \eqref{local:rho} together with Lemma \ref{conv:lem:DeltaE}, and then Lemma \ref{conv:lem:tozeroes}
to conclude that
\begin{equation}\begin{aligned}
    &\frac{\Delta C(h^{(m)},t^{(m)},\rho^{(m)};k_m)}{\Delta C(h^{(m)},t^{(m)},\rho^{(m)};i_m)}
    \le\frac{-\Delta E(h^{(m)},t^{(m)},\rho^{(m)};k_m)}{-\Delta E(h^{(m)},t^{(m)},\rho^{(m)};i_m)}\\
    &=\frac{\Delta E(h^{(m)},t^{(m)},\rho^{(m)};k_m)}{\Delta E(h^{(m_0)},t^{(m_0)},\rho^{(m_0)};i_{m_0})}
    \to 0\ \text{as}\ m\to\infty.
\end{aligned}\label{eq:fractoinf}\end{equation}

\medskip

\noindent\textbf{Step 2:} We have $k_m\neq 0$ and 
$h_{k_m}^{(m)}=\min_{j\in[1,n_m]}h_j^{(m)}$ along a subsequence. 

\medskip

We claim that the set
\[\N':=\{m\ge m_0:k_m\ne 0\}\]
is infinite.
Otherwise, we set
$\tilde{m}:=\max(\N'\cup\{m_0\})+1$
and observe that
\begin{equation}\label{always:zero}
k_m=0\quad\forall\,m\in[\tilde{m},\infty).
\end{equation}
Then for all $m\in[\tilde{m},\infty)$, 
equations \eqref{subdivision:rules}, \eqref{conv:eq:fullcost} and \eqref{same:nu} imply
\begin{equation}\begin{aligned}\label{DCkmEquality}
&\Delta C(h^{(m+1)},t^{(m+1)},\rho^{(m+1)};k_{m+1})
=v^{(\hat{m})}(0)\tfrac{4^{d_R}-1}{\left(\rho_0^{(m+1)}\right)^{d_R}}
\left(\tfrac{h_1^{(m+1)}}{\rho_1^{(m+1)}}\right)^{d_F}\\
&=4^{d_R}v^{(\hat{m})}(0)\tfrac{4^{d_R}-1}{\left(\rho_0^{(m)}\right)^{d_R}}
\left(\tfrac{h_1^{(m)}}{\rho_1^{(m)}}\right)^{d_F}
=4^{d_R}\Delta C(h^{(m)},t^{(m)},\rho^{(m)};k_m).
\end{aligned}\end{equation}
At the same time, for all $m\in[\tilde{m},\infty)$, we have $i_{m+1}=i_m$ and a computation that is similar to the one above 
yields that
\begin{equation}\label{DCimInquality}
    \Delta C(h^{(m+1)},t^{(m+1)},\rho^{(m+1)};i_{m+1})\le 4^{d_R} \Delta C(h^{(m)},t^{(m)},\rho^{(m)};i_m).
\end{equation}
Equations \eqref{DCkmEquality} and \eqref{DCimInquality} imply that
\begin{align*}
\frac{\Delta C(h^{(m+1)},t^{(m+1)},\rho^{(m+1)};k_{m+1})}{\Delta C(h^{(m+1)},t^{(m+1)},\rho^{(m+1)};i_{m+1})}&\ge\frac{\Delta C(h^{(m)},t^{(m)},\rho^{(m)};k_m)}{\Delta C(h^{(m)},t^{(m)},\rho^{(m)};i_m)}\quad\forall m\in[\tilde{m},\infty),
\end{align*}
and hence, by recursion, that
\begin{equation*}
\frac{\Delta C(h^{(m)},t^{(m)},\rho^{(m)};k_m)}{\Delta C(h^{(m)},t^{(m)},\rho^{(m)};i_m)}\ge \frac{\Delta C(h^{(\tilde{m})},t^{(\tilde{m})},\rho^{(\tilde{m})};k_{\tilde{m}})}{\Delta C(h^{(\tilde{m})},t^{(\tilde{m})},\rho^{(\tilde{m})};i_{\tilde{m}})}>0,\quad\forall m\in[\tilde{m},\infty),
\end{equation*}
contradicting equation \eqref{eq:fractoinf}. This means that $\N'$ must be infinite.

\medskip

Now we claim that the set
\begin{equation*}
\N'':=\{m\in\N':h_{k_m}^{(m)}=\min_{j\in[1,n_m]}h_j^{(m)}\}
\end{equation*}
is infinite. 
Otherwise, we take $\tilde{m}:=\max(\N''\cup\min(\N'))+1$
and find that
\begin{equation}\label{hkm:too:big}
h_{k_m}^{(m)}>\min_{j\in[1,n_m]}h_j^{(m)}\quad\forall m\in[\tilde{m},\infty)\cap\N'.
\end{equation}
We assert by induction that this implies 
\begin{equation}\label{h:stalls}
\min_{j\in[1,n_m]}h_j^{(m)}=\min_{j\in[1,n_{\tilde{m}}]}h_j^{(\tilde{m})}
\quad\forall m\in[\tilde{m},\infty).
\end{equation}
The statement is clearly true for $m=\tilde{m}$.
Now assume that it holds for some $ m\in[\tilde{m},\infty)$.
If $m\in\N'$, then it follows from 
\eqref{hkm:too:big} and 
\eqref{2l} 
that
\[h_{k_m}^{(m)}\ge 2\min_{j\in[1,n_m]}h_j^{(m)}.\]
In view of 
\eqref{subdivision:rules}, this implies 
\eqref{h:stalls} with $m+1$.
If $m\notin\N'$, then $k_m=0$, and 
\eqref{subdivision:rules} implies 
\eqref{h:stalls} with $m+1$ directly.
All in all, statement \eqref{h:stalls} is verified.

However, statement \eqref{h:stalls} contradicts statement \eqref{local:2} of Lemma \ref{conv:lem:tozeroes}. 
Hence the set $\N''$ is indeed infinite. 

\bigskip

\noindent\textbf{Step 3:} The results of steps 1 and 2 lead to a contradiction.

\medskip

For every $m\in\N''$, we have $k_m\ne 0$, and hence equations (\ref{coupling}) and (\ref{delta:c}) provide
\begin{equation}\label{conv:eq:klb}\begin{aligned}
\Delta C(h^{(m)},t^{(m)},\rho^{(m)};k_m)
&\ge V_L\left(\tfrac{2h_{k_m}^{(m)}}{\rho_{k_m}^{(m)}}\right)^{d_F}
\left(\tfrac{4}{\rho_{k_m}^{(m)}}\right)^{d_R}\\
&=\frac{2^{d_R}V_L}{(LP)^{d_F+d_R}}\tfrac{1}{\left(h_{k_m}^{(m)}\right)^{d_F+2d_R}}.
\end{aligned}\end{equation}
Similarly, since  $i_m\ne 0$ and $h_{k_m}^{(m)}=\min_{j\in[1,n_m]}h_j^{(m)}$ for all $m\in\N''$, 
equation (\ref{delta:c}) and Lemma \ref{simple:facts} imply for all $m\in\N''$ the inequality
\begin{align*}
&\Delta C(h^{(m)},t^{(m)},\rho^{(m)};i_m)\\
&\le V_U\begin{cases}
\left(\frac{2h_{i_m}^{(m)}}{\rho_{i_m}^{(m)}}\right)^{d_F}
\left(\frac{4}{\rho_{i_m}^{(m)}}\right)^{d_R}&\\
\qquad+\left(\frac{2^{d_F}-1}{(2LP)^{d_R}\left(h_{{i_m}-1}^{(m)}\right)^{2d_R}}\right)
\left(\frac{h_{i_m}^{(m)}}{\rho_{i_m}^{(m)}}\right)^{d_F},
&{i_m}=n,\\
\left(\frac{2h_{i_m}^{(m)}}{\rho_{i_m}^{(m)}}\right)^{d_F}
\left(\frac{4}{\rho_{i_m}^{(m)}}\right)^{d_R}&\\
\qquad+\left(\frac{4^{d_R}-1}{\left(\rho_{i_m}^{(m)}\right)^{d_R}}\right)
\left(\frac{1}{2LPh_{{i_m}+1}^{(m)}}\right)^{d_F}&\\
\qquad+\left(\frac{2^{d_F}-1}{(2LP)^{d_R}\left(h_{{i_m}-1}^{(m)}\right)^{2d_R}}\right)
\left(\frac{h_{i_m}^{(m)}}{\rho_{i_m}^{(m)}}\right)^{d_F},&\text{otherwise},
\end{cases}
\end{align*}
and hence that
\begin{equation}\label{conv:eq:iub}\begin{aligned}
&\Delta C(h^{(m)},t^{(m)},\rho^{(m)};i_m)\\
&\le V_U\left(\tfrac{2h_{i_m}^{(m)}}{\rho_{i_m}^{(m)}}\right)^{d_F}
\left(\tfrac{4}{\rho_{i_m}^{(m)}}\right)^{d_R}
+V_U\left(\tfrac{4^{d_R}-1}{\left(\rho_{i_m}^{(m)}\right)^{d_R}}\right)
\left(\tfrac{1}{2LPh_{k_m}^{(m)}}\right)^{d_F}\\
&\qquad+V_U\left(\tfrac{2^{d_F}-1}{(2LP)^{d_R}\left(h_{k_m}^{(m)}\right)^{2d_R}}\right)
\left(\tfrac{h_{i_m}^{(m)}}{\rho_{i_m}^{(m)}}\right)^{d_F}.
\end{aligned}\end{equation}
Inequality \eqref{conv:eq:iub} provides that for every $m\in\N''$,
\begin{equation*}\begin{aligned}
&\Delta C(h^{(m)},t^{(m)},\rho^{(m)};i_m)\left(h_{k_m}^{(m)}\right)^{d_F+2d_R}\\
&\le V_U\left(\tfrac{2h_{i_m}^{(m)}}{\rho_{i_m}^{(m)}}\right)^{d_F}
\left(\tfrac{4}{\rho_{i_m}^{(m)}}\right)^{d_R}
\left(h_{k_m}^{(m)}\right)^{d_F+2d_R}\\
&\qquad+V_U\left(\tfrac{4^{d_R}-1}{\left(\rho_{i_m}^{(m)}\right)^{d_R}}\right)
\left(\tfrac{1}{2LP}\right)^{d_F}
\left(h_{k_m}^{(m)}\right)^{2d_R}\\
&\qquad+V_U\left(\tfrac{2^{d_F}-1}{(2LP)^{d_R}}\right)
\left(\tfrac{h_{i_m}^{(m)}}{\rho_{i_m}^{(m)}}\right)^{d_F}
\left(h_{k_m}^{(m)}\right)^{d_F},
\end{aligned}\end{equation*}
and so, using equations \eqref{local:h} 
and \eqref{local:rho}, and noting that $h_{k_m}^{(m)}\le T$ for all $m\in\N$, we obtain that for every $m\in\N''$,
\begin{align*}
&\Delta C(h^{(m)},t^{(m)},\rho^{(m)};i_m)\left(h_{k_m}^{(m)}\right)^{d_F+2d_R}\\
&\le V_U\left(\tfrac{2h_{i_{m_0}}^{({m_0})}}{\rho_{i_{m_0}}^{({m_0})}}\right)^{d_F}
\left(\tfrac{4}{\rho_{i_{m_0}}^{({m_0})}}\right)^{d_R}
T^{d_F+2d_R}
+V_U\left(\tfrac{4^{d_R}-1}{\left(\rho_{i_{m_0}}^{({m_0})}\right)^{d_R}}\right)
\left(\tfrac{1}{2LP}\right)^{d_F}T^{2d_R}\\
&\qquad +V_U\left(\tfrac{2^{d_F}-1}{(2LP)^{d_R}}\right) 
\left(\tfrac{h_{i_{m_0}}^{({m_0})}}{\rho_{i_{m_0}}^{({m_0})}}\right)^{d_F}
T^{d_F},
\end{align*}
which is constant. 
Combining the above statement with inequality \eqref{conv:eq:klb},
we find that, for all $m\in\N''$, 
\begin{align*}
    \frac{\Delta C(h^{(m)},t^{(m)},\rho^{(m)};k_m)}{\Delta C(h^{(m)},t^{(m)},\rho^{(m)};i_m)} &\ge \frac{\frac{2^{d_R}V_L}{(LP)^{d_F+d_R}}}{\Delta C(h^{(m)},t^{(m)},\rho^{(m)};i_m)\left(h_{k_m}^{(m)}\right)^{d_F+2d_R}}  
    >0.
\end{align*}
This contradicts equation \eqref{eq:fractoinf}, and so \eqref{conv:eq:htozero} holds.

\end{proof}

\section{Numerical results}\label{sec:numerical:results}

We compare the performance of our Algorithm \ref{Alg:IterativeMethod} 
with generic parameters $\eps_0,\ldots,\eps_{\ell_{\max}}$
with the previous standard Algorithm \ref{Alg:state:of:the:art} 
in two examples.
We measure the quality of a numerical solution in terms of the a priori 
error bound \eqref{eq:errdef} and the complexity of Algorithms 
\ref{Alg:state:of:the:art} and \ref{Alg:IterativeMethod} in terms of 
the number of grid points computed, see \eqref{def:ExC} and \eqref{def:ExCj}. 
The time taken by Algorithm \ref{Alg:IterativeMethod} 
to determine discretizations is negligible, see Table \ref{Tab:Timebar}.

\medskip

We first provide a detailed study of a simple model problem.

\begin{example}\label{example:simple7}
Let $T=1$ and $L\in\R_+$, and consider the differential inclusion in $\R^d$ given by
\begin{equation}\label{eqex:Simp7}
\dot{x}_i\in[0.9, 1.0]Lx_i\quad\mathrm{for}\quad i\in\{1,\ldots,d\},\quad x(0)=\mathbbm{1}_{\R^d}.
\end{equation}
The exact reachable sets are 
\begin{equation}\label{exact:known}
R^F(t)=[\exp(0.9Lt),\exp(Lt)]^d,\quad t\in[0,1],
\end{equation}
and we have $d_F=d_R=d$ and $P=Le^L$.

\medskip

In the following, we examine the qualitative behavior of Algorithms 
\ref{Alg:state:of:the:art} and \ref{Alg:IterativeMethod} when applied to system \eqref{eqex:Simp7} 
with 
\begin{equation}\label{params:1}
d=2,\quad L=4,\quad\text{and}\quad 
\eps=4.
\end{equation}
Note that $\|R^F(1)\|=\exp(4)\approx 54$, so $\eps=4$
corresponds to roughly a 7\% relative error.

\begin{table}[p]\centering
\begin{tabular}{|l|c|c|c|c|c|} \hline 
$\boldsymbol{\ell}$&\textbf{11}&\textbf{12}&\textbf{13}&\textbf{14}&\textbf{15} \\
\hline
$\eps_\ell$ & 6.4E1 & 3.2E1 & 1.6E1 & 8.0E0 & 4.0E0 \\
\hline
\begin{tabular}[c]{@{}l@{}}
time [s] for reachable set\\ 
computation (lines 5-17) 
\end{tabular} & 4.4E-1 & 2.7E0 & 7.0E1 & 2.8E3 & 2.4E5 \\ 
\hline
\begin{tabular}[c]{@{}l@{}}
time [s] for refinement of\\
discretization (lines 19-26)
\end{tabular} & 7.0E-4 & 1.4E-3 & 4.3E-3 & 1.4E-2 & 8.0E-2 \\ 
\hline\end{tabular}

\caption{Time to run Algorithm \ref{Alg:IterativeMethod} for system \eqref{eqex:Simp7} with parameters \eqref{params:1} and $\ell_{\max}=15$ split based on use.}
\label{Tab:Timebar}
\end{table}  

\begin{figure}[p]
\setlength{\abovecaptionskip}{2pt plus 3pt minus 0pt}
\centering\includegraphics[trim={0 2ex 0 1ex},clip]{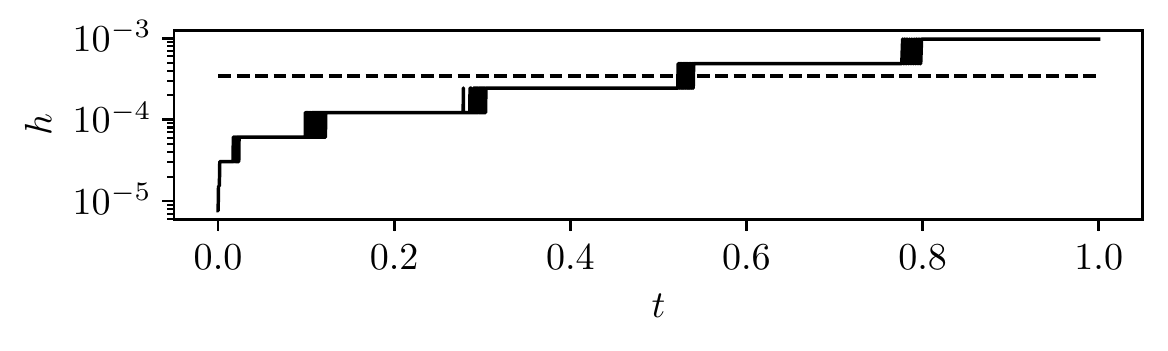}
\caption{Step-sizes for system \eqref{eqex:Simp7} with parameters \eqref{params:1}. Discretization produced by Algorithm \ref{Alg:state:of:the:art} shown in the dashed line, and Algorithm \ref{Alg:IterativeMethod} in the solid line.
\label{fig:simp7Disc}}

\mbox{}\vfill\mbox{}

\centering\includegraphics[trim={0 2ex 0 1.5ex},clip]{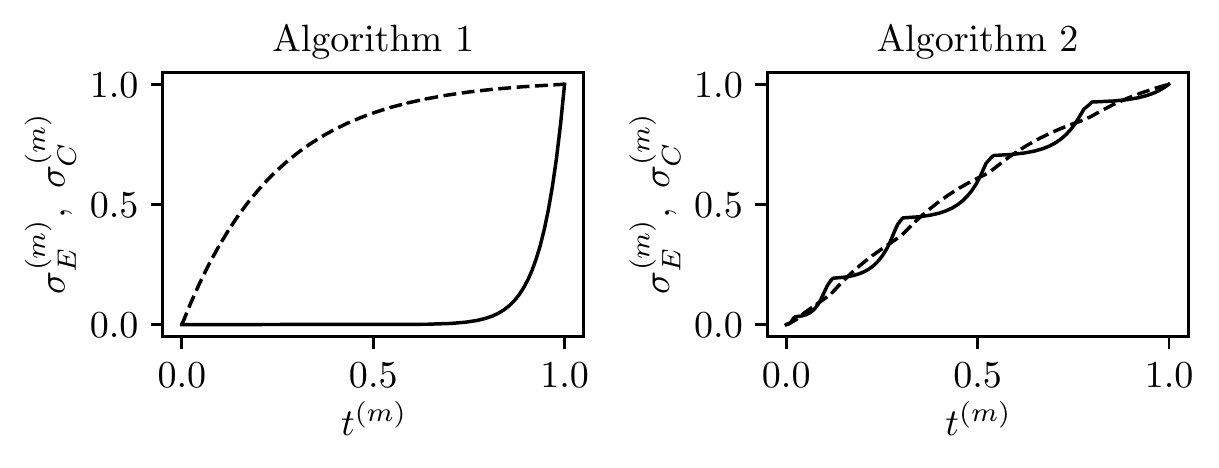}
\caption{Cumulative contributions $\sigma_E$ (dashed line) and $\sigma_C$ (solid line) 
from \eqref{sigma:E} and \eqref{sigma:C} for system \eqref{eqex:Simp7} 
with parameters \eqref{params:1}.\label{fig:simp7details}}
\end{figure}

Table \ref{Tab:Timebar} shows how the time taken by Algorithm 
\ref{Alg:IterativeMethod} is distributed. 
The time taken for the refinement of discretizations is negligible,
and the vast majority of the complexity is caused by the computation of the
final reachable sets returned to the user. 

The discretizations of the reachable sets returned by 
Algorithms \ref{Alg:state:of:the:art} and \ref{Alg:IterativeMethod}
are shown in Figure \ref{fig:simp7Disc}. 
Algorithm \ref{Alg:state:of:the:art} chooses a uniform discretization by 
default, while the step-sizes $h_j$ of Algorithm \ref{Alg:IterativeMethod} 
increase with $j$ in recognition of the sizes of the reachable sets and the 
way the terms $\mc{E}_j$ from \eqref{def:Ej} contribute to the 
global error $E$.

Figure \ref{fig:simp7details} shows the effect of the uniform
discretization of Algorithm \ref{Alg:state:of:the:art} and
the adaptive discretization of Algorithm \ref{Alg:IterativeMethod}
on the cumulative normalized error (dashed line) and cost (solid line) 
of Algorithms \ref{Alg:state:of:the:art} and \ref{Alg:IterativeMethod}
given by
\begin{align}
[\sigma_E(h,t,\rho)]_i
&:=\tfrac{1}{E(h,t,\rho)}
\sum_{j=0}^i\mathcal{E}_j(h,t,\rho),
\quad i=0,\ldots,n_m,\label{sigma:E}\\
[\sigma_C(h,t,\rho)]_i
&:=\tfrac{1}{\hat{C}(h,t,\rho)}
\sum_{j=0}^i\hat{\mathcal{C}}_j(h,t,\rho),
\quad i=0,\ldots,n_m.\label{sigma:C}
\end{align} 
Algorithm \ref{Alg:state:of:the:art} has a significant disparity between 
the local increase in error and the work invested, while
Algorithm \ref{Alg:IterativeMethod} achieves a much better 
balance. 

\medskip

From now on, we test Algorithms \ref{Alg:state:of:the:art} and \ref{Alg:IterativeMethod}
on inclusion \eqref{eqex:Simp7} with several different combinations
of parameters.

Table \ref{table:Simp7} shows the performance of Algorithms 
\ref{Alg:state:of:the:art} and 
\ref{Alg:IterativeMethod} when applied to inclusion \eqref{eqex:Simp7} 
with Lipschitz constants $L=1,2,3,4$, dimensions $d=1,2$, and several 
different error tolerances $\eps>0$.
Algorithm \ref{Alg:IterativeMethod} outperforms
Algorithm \ref{Alg:state:of:the:art} drastically, as expected.
The differences become more pronounced for larger $L$ and $d$.

\begin{table}[p] 
\begin{center}
\begin{tabular}{|m{2em}|m{2em}|m{4em}|m{4em}|m{4em}|m{4em}|m{4em}|} 
\hline 
\multicolumn{1}{|l|}{L=1} 
& $\boldsymbol{\eps}$ & \textbf{2.50E-1} & \textbf{1.25E-1} & \textbf{6.25E-2} & \textbf{3.13E-2} & \textbf{1.56E-2} \\ \hline
\multirow{2}{*}{\textbf{d=1}} 
& Alg\ref{Alg:state:of:the:art} & 5.8E3 & 8.3E4 & 1.2E6 & 2.0E7 & 3.1E8 \\ \cline{2-7} 
& Alg\ref{Alg:IterativeMethod} &1.7E3 & 1.9E4 & 2.5E5 & 3.6E6 & 5.4E7 \\ \hline\multirow{2}{*}{\textbf{d=2}} 
& Alg\ref{Alg:state:of:the:art} & 2.7E6 & 2.8E8 & 3.3E10 & 4.1E12 & \cellcolor{red!25}5.1E14 \\ \cline{2-7} 
& Alg\ref{Alg:IterativeMethod} &1.1E5 & 6.2E6 & 5.5E8 & 5.7E10 & 6.6E12 \\ \hline\end{tabular}

\vspace*{0.5cm}

\begin{tabular}{|m{2em}|m{2em}|m{4em}|m{4em}|m{4em}|m{4em}|m{4em}|}  \hline \multicolumn{1}{|l|}{L=2} &  $\boldsymbol{\eps}$ & \textbf{2.00E0} & \textbf{1.00E0} & \textbf{5.00E-1} & \textbf{2.50E-1} & \textbf{1.25E-1} \\ \hline\multirow{2}{*}{\textbf{d=1}} & Alg\ref{Alg:state:of:the:art} & 1.8E6 & 2.9E7 & 4.7E8 & 7.6E9 & 1.2E11 \\ \cline{2-7} & Alg\ref{Alg:IterativeMethod} &4.1E3 & 5.0E4 & 6.1E5 & 8.7E6 & 1.3E8 \\ \hline\multirow{2}{*}{\textbf{d=2}} & Alg\ref{Alg:state:of:the:art} & 1.7E11 & 2.3E13 & \cellcolor{red!25} 3.1E15 & \cellcolor{red!25}  4.0E17 & \cellcolor{red!25} 5.1E19 \\ \cline{2-7} & Alg\ref{Alg:IterativeMethod} &2.5E5 & 1.7E7 & 1.4E9 & 1.4E11 & 1.5E13 \\ \hline\end{tabular}

\vspace*{0.5cm}

\begin{tabular}{|m{2em}|m{2em}|m{4em}|m{4em}|m{4em}|m{4em}|m{4em}|}  \hline \multicolumn{1}{|l|}{L=3} & $\boldsymbol{\eps}$ & \textbf{1.60E1} & \textbf{8.00E0} & \textbf{4.00E0} & \textbf{2.00E0} & \textbf{1.00E0} \\ \hline\multirow{2}{*}{\textbf{d=1}} & Alg\ref{Alg:state:of:the:art} & 9.4E7 & 1.6E9 & 2.6E10 & 4.3E11 & 6.9E12 \\ \cline{2-7} & Alg\ref{Alg:IterativeMethod} &2.6E3 & 2.9E4 & 3.3E5 & 4.2E6 & 5.8E7 \\ \hline\multirow{2}{*}{\textbf{d=2}} & Alg\ref{Alg:state:of:the:art} &\cellcolor{red!25} 4.2E14 &\cellcolor{red!25} 6.1E16 & \cellcolor{red!25} 8.4E18 & \cellcolor{red!25} 1.1E21 & \cellcolor{red!25} 1.5E23 \\ \cline{2-7} & Alg\ref{Alg:IterativeMethod} &1.5E5 & 3.9E6 & 2.8E8 & 2.3E10 & 2.2E12 \\ \hline\end{tabular}

\vspace*{0.5cm}

\begin{tabular}{|m{2em}|m{2em}|m{4em}|m{4em}|m{4em}|m{4em}|m{4em}|}  \hline \multicolumn{1}{|l|}{L=4} &  $\boldsymbol{\eps}$ & \textbf{6.40E1} & \textbf{3.20E1} & \textbf{1.60E1} & \textbf{8.00E0} & \textbf{4.00E0} \\ \hline\multirow{2}{*}{\textbf{d=1}} & Alg\ref{Alg:state:of:the:art} & 3.8E10 & 6.4E11 & 1.0E13 & 1.7E14 & \cellcolor{red!25} 2.7E15 \\ \cline{2-7} & Alg\ref{Alg:IterativeMethod} &1.2E4 & 1.1E5 & 1.2E6 & 1.5E7 & 2.1E8 \\ \hline\multirow{2}{*}{\textbf{d=2}} & Alg\ref{Alg:state:of:the:art} &\cellcolor{red!25}  3.5E19 &\cellcolor{red!25}  4.9E21 &\cellcolor{red!25}  6.6E23 &\cellcolor{red!25}  8.7E25 & \cellcolor{red!25} 1.1E28 \\ \cline{2-7} & Alg\ref{Alg:IterativeMethod} &6.6E5 & 2.8E7 & 1.7E9 & 1.3E11 & 1.3E13 \\ \hline\end{tabular}

\end{center}
\caption{Number of grid points computed by Algorithms 
\ref{Alg:state:of:the:art} and \ref{Alg:IterativeMethod}
to achieve error tolerance 
\eqref{eq:errdef} when applied to system \eqref{eqex:Simp7}. 
The high complexity of Algorithm \ref{Alg:state:of:the:art}
forced us to infer the values in the cells coloured in red
using a workaround.}
\label{table:Simp7}
\end{table}

In Figure \ref{fig:simp7errinc}, we plot the relative error
\begin{align}
\delta_C(h,t,\rho)
&:=\tfrac{1}{\hat{C}(h,t,\rho)}\sum_{j=0}^{n-1} 
\left|C_j(h,t,\rho)-\hat{C}_j(h,t,\rho)\right|\label{eq:costerr}
\end{align}
of the estimator $C$ from \eqref{def:C} and \eqref{def:Cj}
against the error bound $E$ from equation \eqref{eq:errdef}
for every experiment with Algorithm \ref{Alg:IterativeMethod} 
in Table \ref{table:Simp7}.
Every curve represents one experiment with $L\in\{1,2,3,4\}$,
and every data point represents one execution of lines 5-17 of
Algorithm \ref{Alg:IterativeMethod}.
We see that 
\[\lim_{m\to\infty}\delta_C(h^{(m)},t^{(m)},\rho^{(m)})
=\lim_{m\to\infty}E(h^{(m)},t^{(m)},\rho^{(m)})=0,\]
which confirms that the estimator $C$ is designed appropriately.

\begin{figure}[ht]
\setlength{\abovecaptionskip}{5pt plus 3pt minus 2pt}
\centering\includegraphics[trim={0 2ex 0 1.5ex},clip]{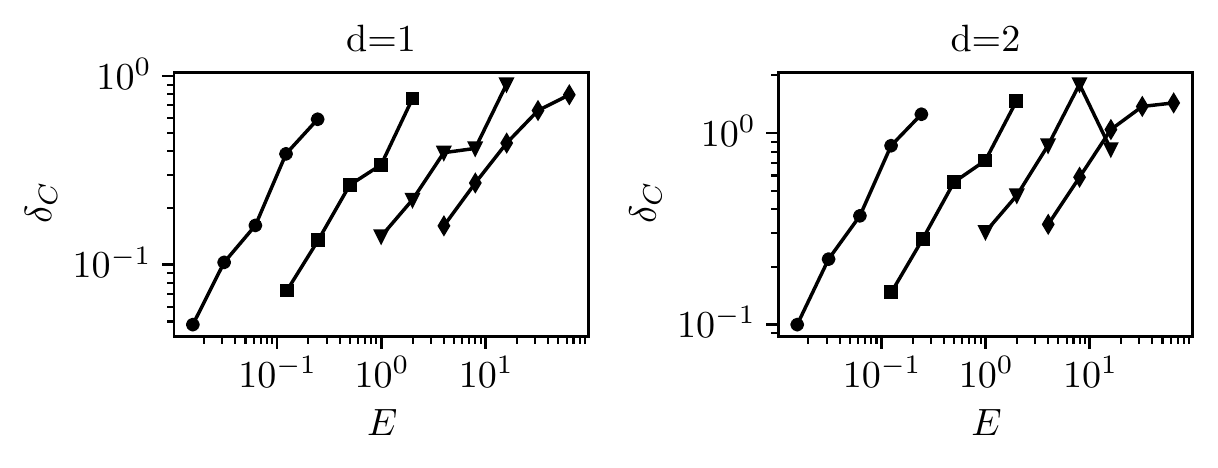}
\caption{Relative error $\delta_C$ from \eqref{eq:costerr} plotted against error
$E$ from \eqref{eq:errdef} when Algorithm \ref{Alg:IterativeMethod} is applied to 
system \eqref{eqex:Simp7}.
Values for $L=1,2,3,4$ marked using circles, squares, triangles and diamonds, respectively.
\label{fig:simp7errinc}}
\end{figure}

\end{example}

Now we consider a more complex control system from biochemistry.

\begin{example}\label{ex:Michaelis}

\begin{figure}[p]
\setlength{\abovecaptionskip}{0pt plus 6pt minus 0pt}
\centering\includegraphics[trim={0 1.5ex 0 1.5ex},clip]{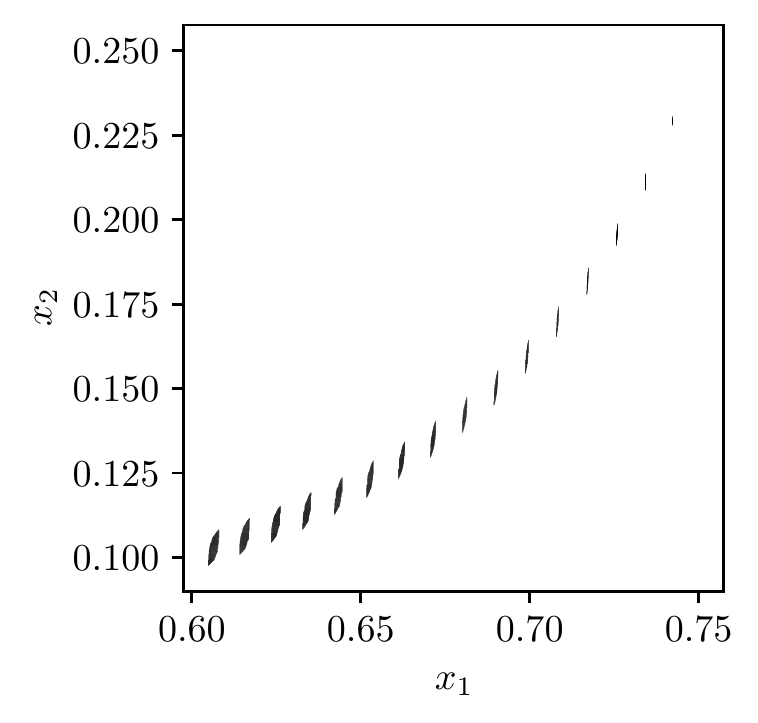}
\caption{Reachable sets generated by Algorithm \ref{Alg:IterativeMethod} applied to system 
\eqref{eq:Michaelis} with $\eps_{\ell_{max}}=2^{-7}$.
Resolution adjusted for printing purposes.\label{fig:MMReach}}

\mbox{}\vspace{1ex}\mbox{}

\centering\includegraphics[trim={0 2ex 0 1ex},clip]{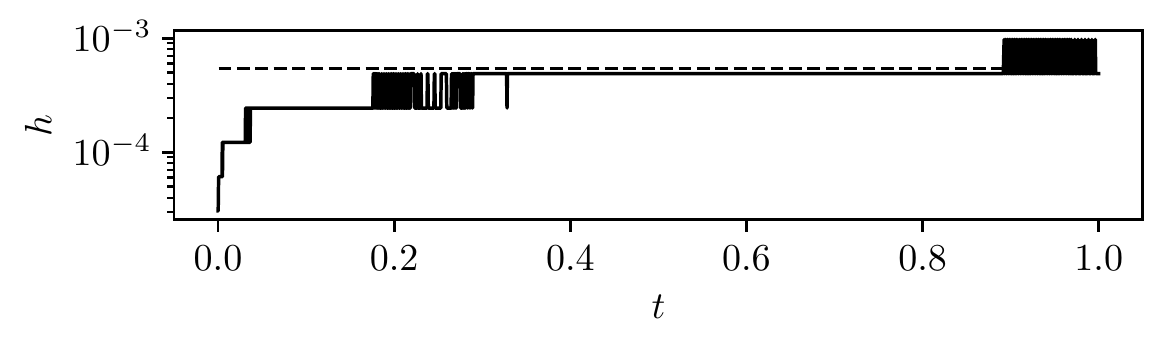}
\caption{Stepsizes for system \eqref{eq:Michaelis} with $\eps=2^{-7}$. Discretization produced by Algorithm \ref{Alg:state:of:the:art} shown in the dashed line, and Algorithm \ref{Alg:IterativeMethod} in the solid line.\label{fig:MMDisc}}  
 
\mbox{}\vspace{1ex}\mbox{}
    
\centering\includegraphics[trim={0 2ex 0 1.5ex},clip]{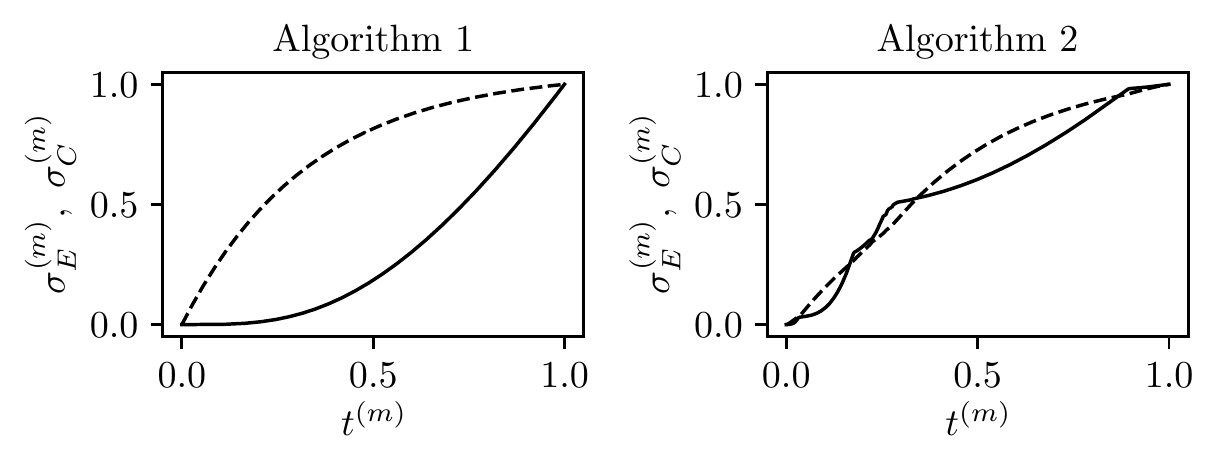}
\caption{Cumulative contributions 
$\sigma_E$ (dashed line) and $\sigma_C$ (solid line) 
from \eqref{sigma:E} and \eqref{sigma:C} for system \eqref{eq:Michaelis} 
with $\eps=2^{-7}$.\label{fig:MMDetails}}    
\end{figure}

We consider the reduced Michaelis-Menten system 
\begin{equation}\label{eq:Michaelis}
    \begin{aligned}
    \dot{x}_1&=-k_1e_0x_1+(k_1x_1+k_{-1})x_2,\\
    \dot{x}_2&\in k_1e_0x_1-(k_1x_1+k_{-1}+[k_2^-,k_2^+])x_2
    \end{aligned}
\end{equation}
with dimensions $d_R=2$ and $d_F=1$,
with parameters $e_0=0.6$, $k_{-1}=0.05$ and $k_1=0.5$, 
with an uncertainty specified by the parameters $k_2^-=1.8$ and $k_2^+=2.0$, 
and with initial conditions $x_1(0)=0.75$ and $x_2(0)=0.25$ 
on the time-interval $[0,1]$. In the relevant region, we have $L\le 3.0$ and $P\le 0.61$.

\medskip

We examine Algorithms \ref{Alg:state:of:the:art} and 
\ref{Alg:IterativeMethod} when applied to system \eqref{eq:Michaelis} 
with error tolerance $\eps=2^{-7}$.

Figure \ref{fig:MMReach} shows the evolution of the approximate reachable sets
generated by Algorithm \ref{Alg:IterativeMethod}.
The sets, progressing from the top right corner to the bottom left corner of
the figure, are snapshots of the reachable set taken at times
$t\in\{\tfrac{j}{16}:0\le j\le 16\}$.

Figures \ref{fig:MMDisc} and \ref{fig:MMDetails} are the equivalents
of Figures \ref{fig:simp7Disc} and \ref{fig:simp7details} from Example 
\ref{example:simple7}.
The differences between Algorithms \ref{Alg:state:of:the:art} 
and \ref{Alg:IterativeMethod} are not as pronounced as in 
Example \ref{example:simple7}, because the reachable sets 
do not vary as much in size.

\medskip

\begin{table}\centering
 \begin{tabular}{|l|l|l|l|l|l|} \hline $\boldsymbol{\eps}$ & \textbf{1.25E-1} & \textbf{6.25E-2} & \textbf{3.13E-2} & \textbf{1.56E-2} & \textbf{7.81E-3} \\ \hline \textbf{Algorithm \ref{Alg:state:of:the:art}} & 7.8E5 & 3.3E7 & 1.9E9 & 1.1E11 & 7.0E12 \\ \hline \textbf{Algorithm \ref{Alg:IterativeMethod}} & 9.6E4 & 4.8E6 & 1.6E8 & 8.4E9 & 4.6E11 \\ \hline \end{tabular}
\caption{Number of grid points computed by Algorithms 
\ref{Alg:state:of:the:art} and \ref{Alg:IterativeMethod} to achieve error tolerance $\eps$
for system \eqref{eq:Michaelis}.\label{tab:MMCosts}}
\end{table}

Table \ref{tab:MMCosts} shows the performance of Algorithms \ref{Alg:state:of:the:art} and \ref{Alg:IterativeMethod} when applied to system 
\eqref{eq:Michaelis} for varying error tolerance $\eps$. 
Again, Algorithm \ref{Alg:IterativeMethod} is more efficient, 
but the speedup is less pronounced than in Example \ref{example:simple7}.

\end{example}

\bibliographystyle{plain}
\bibliography{main}

\begin{thebibliography}{10}

\bibitem{Alamo}
T.~Alamo, J.M. Bravo, and E.F. Camacho.
\newblock Guaranteed state estimation by zonotopes.
\newblock {\em Automatica J. IFAC}, 41(6):1035--1043, 2005.

\bibitem{Althoff:CORA}
M.~Althoff, O.~Stursberg, and M.~Buss.
\newblock Reachability analysis of nonlinear systems with uncertain parameters
  using conservative linearization.
\newblock In {\em Proceedings of the 47th IEEE Conference on Decision and
  Control}, pages 4042--4048. IEEE, 2008.

\bibitem{Althoff:zonotopes}
M.~Althoff, O.~Stursberg, and M.~Buss.
\newblock Computing reachable sets of hybrid systems using a combination of
  zonotopes and polytopes.
\newblock {\em Nonlinear Anal. Hybrid Syst.}, 4(2):233--249, 2010.

\bibitem{1984-AubinCellina-Book}
J.-P. Aubin and A.~Cellina.
\newblock {\em Differential inclusions}, volume 264.
\newblock Grundlehren der mathematischen Wissenschaften, 1984.

\bibitem{Baier:Diss}
R.~Baier.
\newblock Mengenwertige {I}ntegration und die diskrete {A}pproximation
  erreichbarer {M}engen [set-valued integration and the discrete approximation
  of attainable sets].
\newblock {\em Bayreuth. Math. Schr.}, (50), 1995.
\newblock Dissertation, Universit\"{a}t Bayreuth, Bayreuth, 1995.

\bibitem{Baier:2007}
R.~Baier, C.~B\"{u}skens, I.A. Chahma, and M.~Gerdts.
\newblock Approximation of reachable sets by direct solution methods for
  optimal control problems.
\newblock {\em Optim. Methods Softw.}, 22(3):433--452, 2007.

\bibitem{Baier:Chahma}
R.~Baier, I.A. Chahma, and F.~Lempio.
\newblock Stability and convergence of {E}uler's method for state-constrained
  differential inclusions.
\newblock {\em SIAM J. Optim.}, 18(3):1004--1026, 2007.

\bibitem{Baier:2013}
R.~Baier, M.~Gerdts, and I.~Xausa.
\newblock Approximation of reachable sets using optimal control algorithms.
\newblock {\em Numer. Algebra Control Optim.}, 3(3):519--548, 2013.

\bibitem{Beyn:2007}
W.-J. Beyn and J.~Rieger.
\newblock Numerical fixed grid methods for differential inclusions.
\newblock {\em Computing}, 81(1):91--106, 2007.

\bibitem{Beyn:2010}
W.-J. Beyn and J.~Rieger.
\newblock The implicit euler scheme for one-sided lipschitz differential
  inclusions.
\newblock {\em Discrete Contin. Dyn. Syst. Ser. B}, 14(2):409--428, 2010.

\bibitem{Colombo:Lorenz:15}
R.M. Colombo, T.~Lorenz, and N.~Pogodaev.
\newblock On the modeling of moving populations through set evolution
  equations.
\newblock {\em Discrete Contin. Dyn. Syst.}, 35(1):73--98, 2015.

\bibitem{Colombo:2013}
R.M. Colombo and N.~Pogodaev.
\newblock On the control of moving sets: positive and negative confinement
  results.
\newblock {\em SIAM J. Control Optim.}, 51(1):380--401, 2013.

\bibitem{Donchev:Farkhi}
T.~Donchev and E.~Farkhi.
\newblock Stability and {E}uler approximation of one-sided {L}ipschitz
  differential inclusions.
\newblock {\em SIAM J. Control Optim.}, 36(2):780--796, 1998.

\bibitem{Dontchev}
A.L. Dontchev and E.~Farkhi.
\newblock Error estimates for discretized differential inclusions.
\newblock {\em Computing}, 41(4):349--358, 1989.

\bibitem{Gerdts:2013}
M.~Gerdts and I.~Xausa.
\newblock Avoidance trajectories using reachable sets and parametric
  sensitivity analysis.
\newblock In {\em System modeling and optimization}, volume 391 of {\em IFIP
  Adv. Inf. Commun. Technol.}, pages 491--500. Springer, Heidelberg, 2013.

\bibitem{Girard}
A.~Girard, C.~Le~Guernic, and O.~Maler.
\newblock Efficient computation of reachable sets of linear time-invariant
  systems with inputs.
\newblock In {\em Hybrid systems: computation and control}, volume 3927 of {\em
  Lecture Notes in Comput. Sci.}, pages 257--271. Springer, Berlin, 2006.

\bibitem{Goubault:2020}
E.~Goubault and S.~Putot.
\newblock Robust under-approximations and application to reachability of
  non-linear control systems with disturbances.
\newblock {\em IEEE Control Systems Letters}, 4(4):928--933, 2020.

\bibitem{Kochdumper}
N.~Kochdumper and M.~Althoff.
\newblock Sparse polynomial zonotopes: a novel set representation for
  reachability analysis.
\newblock {\em IEEE Trans. Automat. Control}, 66(9):4043--4058, 2021.

\bibitem{Komarov}
V.A. Komarov and K.\`E. Pevchikh.
\newblock A method for the approximation of attainability sets of differential
  inclusions with given accuracy.
\newblock {\em Zh. Vychisl. Mat. i Mat. Fiz.}, 31(1):153--157, 1991.

\bibitem{Lakatos}
E.~Lakatos and M.P.H Stumpf.
\newblock Control mechanisms for stochastic biochemical systems via computation
  of reachable sets.
\newblock {\em R. Soc. open sci.}, 4(8):160790, 2017.

\bibitem{Meng:2022}
Y.~Meng, Z.~Qiu, M.T.B. Waez, and C.~Fan.
\newblock Case studies for computing density of reachable states for safe
  autonomous motion planning.
\newblock In {\em NASA Formal Methods: 14th International Symposium, NFM 2022,
  Pasadena, CA, USA, May 24--27, 2022, Proceedings}, pages 251--271. Springer,
  2022.

\bibitem{Mordukhovich:2016}
B.S. Mordukhovich and Y.~Tian.
\newblock Implicit {E}uler approximation and optimization of one-sided
  {L}ipschitzian differential inclusions.
\newblock In {\em Nonlinear analysis and optimization}, volume 659 of {\em
  Contemp. Math.}, pages 165--188. Amer. Math. Soc., Providence, RI, 2016.

\bibitem{Lygeros}
F.~Parise, M.E. Valcher, and J.~Lygeros.
\newblock Computing the projected reachable set of stochastic biochemical
  reaction networks modeled by switched affine systems.
\newblock {\em IEEE Trans. Automat. Control}, 63(11):3719--3734, 2018.

\bibitem{Riedl:2021}
W.~Riedl, R.~Baier, and M.~Gerdts.
\newblock Optimization-based subdivision algorithm for reachable sets.
\newblock {\em J. Comput. Dyn.}, 8(1):99--130, 2021.

\bibitem{Rieger:2014}
J.~Rieger.
\newblock Semi-implicit euler schemes for ordinary differential inclusions.
\newblock {\em SIAM Journal on Numerical Analysis}, 52(2):895--914, 2014.

\bibitem{Rieger:2015}
J.~Rieger.
\newblock Robust boundary tracking for reachable sets of nonlinear differential
  inclusions.
\newblock {\em Found. Comput. Math.}, 15(5):1129--1150, 2015.

\bibitem{Rieger:2016}
J.~Rieger.
\newblock The {E}uler scheme for state constrained ordinary differential
  inclusions.
\newblock {\em Discrete Contin. Dyn. Syst. Ser. B}, 21(8):2729--2744, 2016.

\bibitem{Rungger:2018}
M.~Rungger and M.~Zamani.
\newblock Accurate reachability analysis of uncertain nonlinear systems.
\newblock In {\em Proceedings of the 21st international conference on hybrid
  systems: Computation and control (part of CPS week)}, pages 61--70, 2018.

\bibitem{Sandberg}
M.~Sandberg.
\newblock Convergence of the forward euler method for nonconvex differential
  inclusions.
\newblock {\em SIAM Journal on Numerical Analysis}, 47(1):308--320, 2008.

\bibitem{Reissig:2022}
M.~Serry and G.~Reissig.
\newblock Overapproximating reachable tubes of linear time-varying systems.
\newblock {\em IEEE Trans. Automat. Control}, 67(1):443--450, 2022.

\bibitem{Shao}
L.~Shao, F.~Zhao, and Y.~Cong.
\newblock Approximation of convex bodies by multiple objective optimization and
  an application in reachable sets.
\newblock {\em Optimization}, 67(6):783--796, 2018.

\bibitem{2002-Smirnov}
G.V. Smirnov.
\newblock {\em Introduction to the theory of differential inclusions},
  volume~41 of {\em Graduate Studies in Mathematics}.
\newblock American Mathematical Society, Providence, RI, 2002.

\bibitem{Veliov:counterexample}
V.~Veliov.
\newblock Discrete approximations of integrals of multivalued mappings.
\newblock {\em C. R. Acad. Bulgare Sci.}, 42(12):51--54, 1989.

\bibitem{Veliov}
V.~Veliov.
\newblock Second order discrete approximations to strongly convex differential
  inclusions.
\newblock {\em Systems Control Lett.}, 13(3):263--269, 1989.

\bibitem{Veliov:affine}
V.~Veliov.
\newblock On the time-discretization of control systems.
\newblock {\em SIAM J. Control Optim.}, 35(5), 1997.

\end{thebibliography}
\end{document}